\title{A classification of nilpotent $3$-BCI groups}
\author{Hiroki  Koike\footnote{Supported in part by ARRS - Agencija za raziskovanje Republike Slovenija, program no. P1-0285.} \\
{\small  UP IAM and UP FAMNIT, University of Primorska}  \\  [-0.8ex]
{\small  Muzejski trg 2, SI-6000 Koper, Slovenia} \\ 
{\small  \texttt{hiroki.koike@upr.si}}
\and 
 Istv\'an Kov\'acs\footnote{Supported in part by ARRS - Agencija za raziskovanje Republike Slovenija, program no. P1-0285 and project N1-0011 (ESF EUROCORES EUROGiga/GReGAS).} \\
{\small  UP IAM and UP FAMNIT, University of Primorska}  \\  [-0.8ex]
{\small  Muzejski trg 2, SI-6000 Koper, Slovenia} \\ 
{\small  \texttt{istvan.kovacs@upr.si}}
}
\def\C{\mathcal{C}}
\def\Cs{\mathcal{C}_{\mathrm{sub}}}
\def\ee{\mathbf{1}}
\def\G{\mathcal{G}}
\def\H{\mathcal{H}}
\def\nn{\mathbf{0}}
\def\Q{Q}
\def\Z{\mathbb{Z}}
\DeclareMathOperator{\aut}{Aut}
\DeclareMathOperator{\bcay}{BCay}
\DeclareMathOperator{\cay}{Cay}
\DeclareMathOperator{\orb}{Orb}
\DeclareMathOperator{\sym}{Sym}
\newtheorem{thm}{Theorem}[section]
\newtheorem{lem}[thm]{Lemma}
\newtheorem{cor}[thm]{Corollary}
\newtheorem{prop}[thm]{Proposition}
\theoremstyle{remark}
\newtheorem{rem}[thm]{Remark}
\begin{document}

\maketitle

\begin{abstract}
Given a finite group $G$ and a subset $S\subseteq G,$ the bi-Cayley graph $\bcay(G,S)$  is the graph whose vertex 
set is $G \times \{0,1\}$ and edge set is  
$\{ \{(x,0),(s x,1)\}  : x \in G, s\in S \}$. A bi-Cayley graph $\bcay(G,S)$ is called a BCI-graph if for any bi-Cayley graph 
$\bcay(G,T),$ $\bcay(G,S) \cong \bcay(G,T)$ implies that  $T =  g S^\alpha$ for some $g \in G$ and $\alpha \in \aut(G)$. 
A group $G$ is called an $m$-BCI-group if all bi-Cayley graphs of $G$ of valency at most $m$ are BCI-graphs.
In this paper we prove that, a finite nilpotent group is a $3$-BCI-group if and only if it is in the form $U \times V,$
where $U$ is a homocyclic group of odd order, and $V$ is trivial or one of the groups $\Z_{2^r},$ $\Z_2^r$ and $\Q_8$. 

\medskip\noindent{\it Keywords:} bi-Cayley graph, BCI-group, graph  isomorphism.

\medskip\noindent{\it MSC 2010:}  05C25.
\end{abstract}

\section{Introduction}

In this paper every group and every (di)graph will be finite. 
Given a  group $G$ and a subset $S\subseteq G,$ the \emph{bi-Cayley graph} $\bcay(G,S)$ of $G$ 
with respect to $S$ is the graph whose vertex set is $G \times \{0,1\}$ and edge set is  
$\{ \{(x,0),(s x,1)\}  : x \in G, s\in S \}$.  
We call two bi-Cayley graphs $\bcay(G,S)$ and $\bcay(G,T)$  \emph{bi-Cayley isomorphic} if 
$T = g S^\alpha$ for some $g \in G$ and $\alpha \in \aut(\Gamma)$ 
(here and in what follows for $x \in G$ and $R \subseteq G,$   $x R = \{ x r : r  \in R\}$).  
It is an easy exercise to  show that bi-Cayley isomorphic bi-Cayley graphs are 
isomorphic as usual graphs. The converse implication is not true in general, and this makes the following definition
 interesting (see \cite{XuJSL08}):  a bi-Cayley graph $\bcay(G,S)$ is a \emph{BCI-graph} if for any bi-Cayley graph 
$\bcay(G,T)$, $\bcay(G,S) \cong \bcay(G,T)$ implies that $T =  g S^\alpha$ for some  $g \in G$ and $\alpha \in \aut(G)$.  
A group $G$ is called an \emph{$m$-BCI-group} if all bi-Cayley graphs of $G$ of valency at most $m$ are BCI-graphs, 
and an $|G|$-BCI-group is simply called a \emph{BCI-group}.

\begin{rem}
It should be remarked that each of the above concepts has a natural analog in the theory of Cayley digraphs. 
Recall that, for a group $G$ and a subset $S\subseteq G,$ the \emph{Cayley digraph} $\cay(G,S)$ 
is the digraph whose vertex set is $G$ and arc set is $\{  (x,s x)  : x \in G, s\in S \}$. 
A Cayley digraph $\cay(G,S)$ is called a \emph{CI-graph} if for any Cayley digraph 
$\cay(G,T)$, $\cay(G,S) \cong \cay(G,T)$ implies that $T =  S^\alpha$ for some $\alpha \in \aut(G),$ 
$G$ is called an \emph{$m$-DCI-group} if all Cayley digraphs of $G$ of valency at most $m$ are 
CI-graphs, and an $(|G|-1)$-CI-group is simply called a \emph{CI-group}. %  (see \cite[Definition 1.1]{LiPX98})
Finite CI-groups and $m$-DCI-groups have  attracted considerable attention over the last $40$ years.
For more information on these groups, the reader is referred to the survey \cite{Li02}.
\end{rem}

The study of $m$-BCI-groups was initiated in \cite{XuJSL08}, where it was shown that every group is a $1$-BCI-group, 
and a group is a $2$-BCI-group if and only if it has the property that 
any two elements of the same order are either fused or inverse fused (these groups are described in \cite{LiP97}). 
The problem  of classifying all $3$-BCI-groups  is still open. Up to our knowledge, it is only known that every cyclic group is a 
$3$-BCI-group (this is a consequence of \cite[Theorem 1.1]{WieZ07}, see also \cite{JinL09}), 
and that $A_5$ is the only non-Abelian  simple $3$-BCI-group (see \cite{JinL10}). 
The purpose of this paper is to make a further step  by classifying the nilpotent $3$-BCI-groups. % \medskip

In fact, we have a relatively short list of candidates for nilpotent $3$-BCI groups, which arises from the earlier 
works of W. Jin and W. Liu  \cite{JinL10,JinL11} on the Sylow $p$-subgroups of $3$-BCI-groups. 
In particular, a Sylow $2$-subgroup of a $3$-BCI-group is $\Z_{2^r},$ $\Z_2^4$ or the quaternion group $\Q_8$ 
(see \cite{JinL10}), while a  Sylow $p$-subgroup for $p > 2$ is homocyclic (see \cite{JinL11}). 
A group is said to be \emph{homocyclic} if it is a direct product of cyclic groups of the same order. 
Consequently, if $G$ is  a nilpotent $3$-BCI-group, then $G$ decomposes as 
$G = U \times V,$ where $U$ is a homocyclic group of odd order, and $V$ is trivial or one of the groups $\Z_{2^r},$ $\Z_2^r$ and $\Q_8$. In this paper we prove that the converse implication also holds, and by this complete the classification of 
nilpotent $3$-BCI-groups. Our main result is the following theorem:

\begin{thm}\label{Main}
Every finite group $U \times V$  is a $3$-BCI-group, 
where $U$ is a homocyclic group of odd order, and $V$ is trivial or one of the groups $\Z_{2^r},$ $\Z_2^r$ and $\Q_8$. 
\end{thm}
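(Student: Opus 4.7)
The plan is to verify the BCI property for every bi-Cayley graph $\bcay(G,S)$ with $|S|\leq 3$ and $G=U\times V$ as in the statement. I would work with the standard reformulation: $\bcay(G,S)$ is a BCI-graph if and only if every subgroup of $\aut(\bcay(G,S))$ that is isomorphic to $G$ and acts semiregularly with orbits $G\times\{0\}$ and $G\times\{1\}$ is conjugate, within $\aut(\bcay(G,S))$, to the natural image of $G$ acting by left multiplication on each part. This converts the problem into a conjugacy question about semiregular subgroups.

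The cases $|S|\leq 2$ are handled directly. For $|S|=1$ the graph is a perfect matching and the property is trivial. For $|S|=2$, after a left translation one may assume $\nn\in S$, so $S=\{\nn,s\}$; the graph is then a disjoint union of $2|G|/|s|$ cycles of length $2|s|$, and $\bcay(G,\{\nn,s\})\cong \bcay(G,\{\nn,t\})$ forces $|s|=|t|$. It then suffices to verify that any two elements of the same order lie in the same $\aut(G)$-orbit. This holds in each homocyclic odd-order group $U$ and in each of $\Z_{2^r}$, $\Z_2^r$, $\Q_8$; the coprimality of $|U|$ and $|V|$ in $G=U\times V$ reduces the $G$-case to these pieces.

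The substantial work is $|S|=3$. Here I would first reduce to the connected case: if $S=\{\nn,a,b\}$, the connected component of $(\nn,0)$ in $\bcay(G,S)$ is $\bcay(H,S)$ for $H=\langle a,b\rangle\leq G$, and such a subgroup $H$ still has the allowed form $U'\times V'$. For connected cubic bi-Cayley graphs one must then pin down $\aut(\bcay(G,S))$ tightly. I would exploit the coprime decomposition $G=U\times V$ to split the analysis: the block system determined by $\hat{U}$-orbits (respectively $\hat{V}$-orbits) is invariant under $\aut(\bcay(G,S))$, so the conjugacy problem for semiregular copies of $G$ reduces to the analogous problem for $\bcay(U,S_U)$ and $\bcay(V,S_V)$ where $S_U,S_V$ are the projections of $S$. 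For the homocyclic odd-order factor $U$ one can invoke the $3$-BCI property of homocyclic odd-order groups (building on Wielandt/Jin--Liu Schur-ring techniques), while for $V$ the short list of candidates admits a finite, case-by-case analysis.

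The principal obstacle is the case $V=\Q_8$, where $G$ is non-abelian and neither Schur-ring arguments nor character-theoretic tricks for abelian groups apply cleanly; one must instead enumerate the $\aut(U\times\Q_8)$-orbits on $3$-element subsets containing $\nn$ and, for each orbit, identify $\aut(\bcay(G,S))$ by hand (via short-cycle structure, girth, or an explicit block system). A secondary difficulty arises for $V=\Z_2^r$, because the abundance of involutions tends to inflate $\aut(\bcay(G,S))$ with symmetries not obviously normalizing $\hat G$; I would control these by showing that any such extra automorphism must be induced by an element of $G\rtimes\aut(G)$, using the rigidity forced by the $U$-factor. Once both special features of $V$ are handled, combining the pieces via the coprime decomposition finishes the proof.
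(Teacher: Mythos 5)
Your reformulation as a conjugacy problem for semiregular subgroups with orbits the two parts is exactly the paper's starting point (its bi-Cayley analogue of Babai's lemma), and your treatment of $|S|\le 2$ and the reduction to the connected case are fine. But the core of your plan for $|S|=3$ has two genuine gaps. First, the claim that the partition into $\hat{U}$-orbits (or $\hat{V}$-orbits) is invariant under all of $\aut(\bcay(G,S))$ is unjustified and in general false: $\hat{U}$ is normal in $\hat{G}$, but $\hat{G}$ itself need not be normal in the full automorphism group, and the orbits of a non-normal subgroup need not form blocks. This is precisely where the difficulty of the theorem lies. The paper shows that when $\hat{G}$ is not normal, the quotient of $\Gamma$ by the core of $\hat{G}$ is forced to be $K_{3,3}$, $Q_3$ or the Heawood graph, and establishing this requires Tutte's theorem ($s$-regularity with $s\le 5$), the Conder--Nedela classification of cubic symmetric graphs of girth $6$, Li's classification of vertex-biprimitive $s$-transitive graphs for $s\ge 4$, and some machine computation; nothing in your outline replaces this. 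Second, invoking ``the $3$-BCI property of homocyclic odd-order groups'' for the factor $U$ is circular: that statement is a special case of the theorem you are proving and is not available in the literature except for cyclic groups (Wiedemann--Zieve); the Jin--Liu results give only necessary conditions on Sylow subgroups, not sufficiency.

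A smaller omission: you never separate the arc-transitive from the non-arc-transitive case. The paper disposes of the non-arc-transitive case cheaply by passing to the valency-$2$ Cayley digraph $\cay(G,s^{-1}S)$ and quoting that $G$ is a $2$-DCI-group; essentially all of the remaining work is in the arc-transitive case, which your proposal leaves open.
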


\begin{rem}
It is interesting to compare the known $3$-BCI-groups with the class of $2$-DCI-groups. 
It follows from the classification  of finite $2$-DCI-groups \cite[Theorem 1.3]{Li99} that, 
$A_5$ is the only non-Abelian simple $2$-DCI-group, and the 
nilpotent $2$-DCI-groups are exactly those given in Theorem \ref{Main}. 
However, a $3$-BCI-group is not always a $2$-DCI-group. A rather exhausted analysis shows that the Frobenius group 
$\Z_3^3 \rtimes \Z_4$ is a $3$-BCI-group, while it is not a $2$-DCI group, which can be seen from \cite[Theorem 1.3]{Li99}.
\end{rem}

We prove Theorem \ref{Main} in two parts. In Section 2 we treat the case when $\bcay(G,S)$ is 
not arc-transitive, $G = U \times V,$ where 
$U$ is a homocyclic group of odd order, and $V$ is trivial or one of $\Z_{2^r},$ $\Z_2^r$ and $\Q_8,$ 
and $|S|=3$. The rest will be done in Section 3.  

\section{Non-arc-transitive BCI-graphs}

We start by fixing the relevant notation and terminology. 
Let $G$ be a finite group acting on a finite set $V$. 
For a subset $U \subseteq V,$ denote by $G_U$ the
elementwise stabilizer of $U$ in $G$, while by $G_{\{U\}}$ the
setwise stabilizer of $U$ in $G$. If $U = \{u_1,\ldots,u_k\},$ then $G_{u_1,\ldots,u_k}$ 
will be written for $G_{\{u_1,\ldots,u_k\}}$, in particular, we write $G_u$ for $G_{\{u\}}$. 
We say that the subset $U$ is \emph{$G$-invariant} if $G$ leaves $U$ setwise fixed, or equivalently, when 
$G_{\{U\}} = G$. Suppose, in addition, that $G$ acts transitively on $V$. 
A subset $\Delta \subseteq V$ is called a \emph{block} for $G$ if for every $g \in G,$ 
$\Delta^g = \Delta$ or $\Delta \cap \Delta^g = \emptyset$. 
The sets $\Delta^g, g \in G,$ form a partition of $V,$ which is called the \emph{system of blocks for $G$ induced by $\Delta$}.
Denoted this partition by $\delta,$ $G$ acts on $\delta$ naturally. The corresponding \emph{kernel}  
will be denoted by $G_\delta,$ i.e., $G_\delta = \{ g \in G :  \Delta'^{\, g}=\Delta' \text{ for all } \Delta' \in \delta \}$.

For a graph $\Gamma$, we let $V(\Gamma)$, $E(\Gamma),$ $A(\Gamma),$ and $\aut(\Gamma)$ denote the 
vertex set, the edge set, the arc set, and the full group of automorphisms of $\Gamma$, respectively.  
For a subset $U \le V(\Gamma),$ we let $\Gamma[U]$ denote the \emph{subgraph of $\Gamma$ induced by $U$}.  
A graph $\Gamma$ is called \emph{arc-transitive} when $\aut(\Gamma)$ is transitive on $A(\Gamma)$. 
We let $K_n$ and $K_{n,n}$ denote the complete graph on $n$ vertices and the complete bipartite 
graph on $2n$ vertices respectively. By a \emph{cubic graph} we simply mean a regular graph of valency $3$.  

Throughout the paper $\C$ denotes the set of all groups $U \times V,$ where $U$ is 
a homocyclic group of odd order, and $V$ is either trivial or one of $\Z_{2^r},$ $\Z_2^r$ and $\Q_8;$ 
and $\Cs$ denotes the set of all groups that have an overgroup in $\C$. \medskip

The main result in this section is the following theorem, which we are going to prove in the end of the section:

\begin{thm}\label{Main-non-AT}
Let $\Gamma=\bcay(G,S),$ $G \in \C,$ $|S|=3,$ and suppose that $\Gamma$ is not arc-transitive. 
Then $\bcay(G,S)$ is a BCI-graph. 
\end{thm}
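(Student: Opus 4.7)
The plan is to recast the BCI-property of $\bcay(G,S)$ as a conjugacy question for certain semiregular subgroups of $\aut(\Gamma)$, and then to pin $\aut(\Gamma)$ down tightly using the fact that $\Gamma$ is cubic and non-arc-transitive, before performing a Sylow-by-Sylow conjugation that uses the nilpotent structure of $G \in \C$. Write $R(G) \le \aut(\Gamma)$ for the natural copy of $G$ given by $g \mapsto \big((x,i) \mapsto (gx,i)\big)$; this is semiregular with orbits the two bipartition classes $V_0 = G \times \{0\}$ and $V_1 = G \times \{1\}$. A standard argument analogous to Babai's criterion for CI-graphs shows that $\bcay(G,S)$ is a BCI-graph if and only if every subgroup $H \le \aut(\Gamma)$ with $H \cong G$ acting semiregularly with orbits $V_0,V_1$ is conjugate to $R(G)$ inside $\aut(\Gamma)$. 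After recording this reduction, the task becomes: for an arbitrary such $H$, produce $\phi \in \aut(\Gamma)$ with $\phi^{-1} H \phi = R(G)$.

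Second, I would use $|S|=3$ together with non-arc-transitivity to constrain $\aut(\Gamma)$. A standard local analysis shows that in a cubic non-arc-transitive graph the stabilizer of a vertex $v$ in $\aut(\Gamma)$ cannot act transitively on the three neighbors of $v$, so its orbit pattern there is $\{1,1,1\}$ or $\{1,2\}$; combined with connectivity of $\Gamma$, this forces $|\aut(\Gamma)_v|$ to be a small $2$-power. In particular $\aut(\Gamma)$ is a small extension of the subgroup $M$ that fixes each of $V_0,V_1$ setwise (of index at most $2$ in $\aut(\Gamma)$), and both $R(G)$ and $H$ are large subgroups of $M$, so $R(G) \cap H$ is a large common subgroup that is often normal in $M$ --- this severely restricts the ways in which $H$ can differ from $R(G)$.

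Third, I would exploit nilpotency of $G = U \times V$. Since $G$ is nilpotent, both $R(G)$ and $H$ are internal direct products of their Sylow subgroups, and for each prime $p$ the Sylow $p$-subgroups of $R(G)$ and of $H$ are isomorphic (to the prescribed homocyclic group for odd $p$, or to one of $\Z_{2^r}$, $\Z_2^r$, $\Q_8$ for $p=2$). For odd $p$ the Sylow subgroups are abelian and homocyclic, so any two semiregular such subgroups of $M$ with matching orbits are conjugate by a short linear-algebra argument. For the Sylow $2$-subgroup a finite case analysis on the four possibilities produces the remaining piece of the conjugating element. Piecing the Sylow-by-Sylow conjugations together inside $M$ then yields the desired global $\phi$.

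The main obstacle is the case $V = \Q_8$, since the quaternion group carries several pairwise non-conjugate faithful semiregular permutation representations; here one really has to lean on non-arc-transitivity to rule out those representations that are not conjugate to $R(\Q_8)$ inside the relevant part of $M$. A secondary technical point appears when $M$ is a proper subgroup of $\aut(\Gamma)$, i.e.\ when some automorphism of $\Gamma$ swaps $V_0$ and $V_1$: in that situation the Sylow-by-Sylow conjugation has to be chosen compatibly with this swap, and verifying this compatibility --- particularly in the $\Q_8$ case --- is where I expect the bulk of the case analysis to reside.
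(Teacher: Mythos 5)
Your opening reduction---that $\bcay(G,S)$ is a BCI-graph if and only if every semiregular subgroup of $\aut(\Gamma)$ isomorphic to $G$ with orbits the two bipartition classes is conjugate to your $R(G)$ (the paper's $\hat G$)---is essentially the paper's Lemma \ref{B-type}, though even this step is not purely formal: both directions rely on an auxiliary automorphism interchanging the two classes and normalizing every subgroup of $\hat G$, whose existence uses $G\in\Cs$ (Lemma \ref{tauX}). The genuine gaps come afterwards. First, non-arc-transitivity plus connectivity does \emph{not} force $|\aut(\Gamma)_v|$ to be a small $2$-power: when the local orbits have sizes $\{1,2\}$ one only gets that the vertex stabilizer is a $2$-group, and there exist connected cubic vertex-transitive graphs with local orbits $\{1,2\}$ and arbitrarily large vertex stabilizers (the split Praeger--Xu graphs), so smallness would have to be proved or the exceptional family excluded, not asserted. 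Second, and more seriously, the claim that two isomorphic semiregular homocyclic abelian $p$-subgroups of $M$ with matching orbits are conjugate ``by a short linear-algebra argument'' is precisely the content of the (B)CI-problem and is false for a general permutation group $M$: isomorphic regular abelian subgroups of a permutation group need not be conjugate in it. Your plan contains no mechanism for actually producing the conjugating element, and the Sylow-by-Sylow assembly (conjugating each Sylow subgroup separately and then ``piecing together'') is itself not a valid deduction without further argument.

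The paper exploits non-arc-transitivity in a different and decisive way that your proposal misses. Vertex-transitivity together with the failure of arc-transitivity gives $\aut(\Gamma)_{\nn}=\aut(\Gamma)_{(s,1)}$ for some $s\in S$; replacing $S$ by $s^{-1}S$, one may assume $\aut(\Gamma)_{\nn}=\aut(\Gamma)_{\ee}$. Then $\{\nn,\ee\}$ is a block, the blocks are exactly the pairs $\{(x,0),(x,1)\}$, and the induced action of $A_{\{(G,0)\}}$ on this block system is faithful and equals $\aut(\cay(G,S))$ (Lemma \ref{B-type-cor}). Babai's criterion (Lemma \ref{B}) then converts the whole conjugacy question into the statement that $\cay(G,s^{-1}S)$, a Cayley digraph with $|s^{-1}S\setminus\{1_G\}|=2$, is a CI-graph, which is supplied by Li's classification of $2$-DCI-groups \cite[Theorem 1.3]{Li99}. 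This reduction outsources precisely the hard conjugacy work that your third and fourth paragraphs leave unresolved; without it (or a genuine substitute), the proposal does not constitute a proof.
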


Given a group $G$ with identity element $1_G,$ we shall use the symbols 
$\nn$ and $\ee$ to denote the elements $(1_G,0)$ and $(1_G,1),$ respectively, from $G \times \{0,1\};$ 
and for a subset $S \subseteq G,$ we write $(S,0) = \{ (s,0) : s \in S \}$ and $(S,1) = \{ (s,1) : s \in S \}$. 
For $g \in G,$ let $\hat{g}$ be the permutation of $G \times \{0,1\}$ defined by 
$(x,i)^{\hat{g}} = (x g,i)$ for every  $x \in G$ and $i \in \{0,1\},$ and let 
$\hat{G} = \{ \hat{g} : g \in G\}$.  Obviously, $\hat{G} \le \aut(\bcay(G,S))$ always holds.

\begin{lem}\label{tauX}
Let $\Gamma$ be a cubic bipartite graph with bipartition classes $\Delta_i,$ $i=1,2,$ and $X \le \aut(\Gamma)$ be a semiregular 
subgroup whose orbits are $\Delta_i,$ $i=1,2,$ and $X \in \Cs$. 
Then $\aut(\Gamma)$ has an element $\tau_X$ which satisfies:
\begin{enumerate}[(i)]
\item  every subgroup of $X$ is normal in $\langle X, \tau_X \rangle;$ 
\item $\langle X, \tau_X \rangle \le \aut(\Gamma)$ is regular on $V(\Gamma)$. 
\end{enumerate}
\end{lem}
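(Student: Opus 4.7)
The plan is to convert the existence of $\tau_X$ into a concrete construction in the bi-Cayley model of $\Gamma$. Since $X$ is semiregular with orbits $\Delta_1$ and $\Delta_2$, fixing base vertices $v_i \in \Delta_i$ and identifying $\Delta_i$ with $X$ via $v_i \cdot x \leftrightarrow x$ presents $\Gamma$ as $\bcay(X, S)$ for some $S \subseteq X$ with $|S| = 3$; translating $S$ by an element of $X$ if necessary, I may assume $1_X \in S$. The desired $\tau_X$ must swap $\Delta_1 \leftrightarrow \Delta_2$ (else $\langle X, \tau_X\rangle$ fails to be transitive on $V(\Gamma)$) and normalize $\hat{X}$ (by (i) applied to $X$ itself), so it must take the shape $\tau_X(x, 0) = (c\alpha(x), 1)$, $\tau_X(x, 1) = (d\alpha(x), 0)$ for some $\alpha \in \aut(X)$ and $c, d \in X$. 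A short edge-chasing computation shows that $\tau_X$ preserves edges iff $c\alpha(S)^{-1}d^{-1} = S$, and $\tau_X^2 \in \hat{X}$ iff $\alpha^2$ is the inner automorphism of $X$ induced by $d\alpha(c)$. Condition~(i) then becomes: $\alpha$ normalizes every subgroup of $X$; given this and $\tau_X^2 \in \hat{X}$, condition~(ii) is immediate from $|\langle \hat{X}, \tau_X\rangle| = 2|X| = |V(\Gamma)|$.

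The description of $\Cs$ splits the construction into two cases. If $X$ is abelian, I would take $\alpha(x) = x^{-1}$ and $c = d = 1_X$: inversion is an involutive power automorphism, so (i) is automatic; the edge condition $\alpha(S)^{-1} = S$ holds trivially; and $\tau_X^2 = \mathrm{id}$ yields (ii).

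The only non-abelian possibility in $\Cs$ is $X \cong A \times \Q_8$ with $A$ abelian of odd order. Here $X$ itself is Hamiltonian, since by coprimality every subgroup of $X$ has the form $A' \times Q'$ with $A' \le A$ and $Q' \le \Q_8$, and both factors are automatically normal; consequently (i) constrains only the induced automorphism $\alpha$. I would take $\alpha = (\alpha_1, \alpha_2)$ with $\alpha_1 \colon A \to A$ the inversion and $\alpha_2 = \iota_p$ an inner automorphism of $\Q_8$, and again $c = d = 1_X$; this choice preserves every subgroup of $X$, and the edge condition reduces to $\iota_p(q_\ell) = q_\ell^{-1}$ for $\ell = 2, 3$, where $(a_\ell, q_\ell)$ denote the non-identity elements of $S$. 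The main technical point will be verifying the existence of such a $p \in \Q_8$: this follows from a short case analysis on the conjugacy classes of $\Q_8$ to which $q_2, q_3$ belong, using that $\iota_p$ inverts a non-central $q \in \Q_8$ precisely when $p$ anti-commutes with $q$, so that the (at most two) required anti-commutation constraints can always be simultaneously satisfied.
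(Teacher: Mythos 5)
Your proposal is correct and follows essentially the same route as the paper: identify $\Gamma$ with $\bcay(X,S)$, take $\tau_X$ to be ``apply an automorphism $\iota$ of $X$ and swap the two classes,'' with $\iota$ acting as inversion on the odd abelian factor and as a suitable inner automorphism on a $\Q_8$ factor, then use the Dedekind property of $X$ for (i) and the order count for (ii). The only cosmetic difference is that you impose the element\-wise condition $\iota_p(q_\ell)=q_\ell^{-1}$ where the paper only asks that $\pi_V(S)$ be conjugate to $\pi_V(S)^{-1}$ as a set; your stronger condition is indeed always satisfiable by the anti-commutation argument you sketch, so the proof goes through.
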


\begin{proof}
It is straightforward to show that $\Gamma \cong \bcay(X,S)$ for some subset $S \subseteq X$ with $1_X \in S$ and $|S|=3$.  
Moreover, there is an isomorphism from $\Gamma$ to $\bcay(G,S)$ which induces a permutation isomorphism from 
$X$ to $\hat{X} \le \aut(\bcay(G,S))$. Therefore, it is sufficient to find  $\tau \in \aut(\bcay(X,S))$ for which
every subgroup of $\hat{X}$  is normal in $\langle \hat{X}, \tau \rangle;$ and 
$\langle \hat{X}, \tau \rangle \le \aut(\bcay(X,S))$  is regular on $V(\bcay(X,S))$. 

Since $X \in \Cs,$ $X = U \times V,$ where $U$ is 
an Abelian group of odd order, and $V$ is trivial or one of $Z_{2^r}, \Z_2^r$ and $\Q_8$.
We prove below the existence of an automorphism $\iota \in \aut(X),$ which maps the set  $S$ to its inverse 
$S^{-1} = \{x^{-1} : x \in S\}$. 
Let $\pi_U$ and $\pi_V$ denote the projections $U \times V \to U$ and $U \times V \to V$ respectively. 
It is sufficient to find an automorphism 
$\iota_1 \in \aut(U)$ which maps $\pi_U(S)$ to $\pi_U(S)^{-1},$ and an   
automorphism $\iota_2 \in \aut(V)$ which maps $\pi_V(S)$ to $\pi_V(S)^{-1}$. 
Since $U$ is Abelian, we are done by choosing $\iota_1$ to be the automorphism $x \mapsto x^{-1}$. 
If $V$ is Abelian, then let $\iota_2 : x \mapsto x^{-1}$. Otherwise,  
$V \cong \Q_8,$ and since $|\pi_V(S) \setminus \{1_V\}| \le 2,$ it follows that $\pi_V(S)$ is conjugate to $\pi_V(S)^{-1}$ in $V$. 
This ensures that $\iota_2$ can be chosen to be 
some  inner automorphism.  Now, define $\iota$ by setting  its restriction $\iota|_U$ to $U$ as 
$\iota|_U = \iota_1,$ and  its restriction $\iota|_V$  to $V$ as $\iota|_V = \iota_2$.
Define the permutation $\tau$ of $X \times \{0,1\}$ by 
$$
(x,i)^\tau = \begin{cases} (x^\iota,1) & \mbox{ if } i=0, \\ 
(x^\iota,0) & \mbox{ if } i=1.  \end{cases}
$$
The vertex $(x,0)$ of $\bcay(X,S)$ has neighborhood $(Sx,1)$. This is mapped  by $\tau$ to the set $(S^{-1}x^\iota,0),$
which is equal to the neighborhood of $(x^\iota,1)$. We have proved that $\tau \in \aut(\bcay(X,S))$. 

It follows from its construction that $\tau$ is an involution. Fix an arbitrary subgroup $Y \le X,$ and pick $y \in Y$. 
We may write $y = y_U y_V$ for some $y_U \in U$ and $y_V \in V$. 
Then $\langle y_U, y_V \rangle \le Y,$ since $y_U$ and $y_V$ commute and $\gcd(|U|,|V|)=1$. 
Also,  $(y_U)^{\iota_1} = y_U^{-1}$ and $(y_V)^{\iota_2} \in \langle y_V \rangle$, implying that 
$y^\iota = (y_U)^{\iota_1} (y_V)^{\iota_2} \in \langle y_U,u_V \rangle \le Y$. 
We conclude that $\iota$ maps $Y$ to itself. 
Thus $\tau^{-1} \hat{y} \tau = \tau \hat{y} \tau = \hat{y^\iota}$ is in $\hat{Y},$ and $\tau$ normalizes $\hat{Y}$. 
Since $X \in \Cs,$  $\hat{Y}$ is also normal in $\hat{X},$ and  part (i) follows.

For part (ii), observe that $|\langle \hat{X},\tau \rangle| = 2|X| = |V(\bcay(X,S))|$. 
Clearly, $\langle \hat{X},\tau \rangle$ is transitive on $V(\bcay(X,S)),$ so it is regular. 
\end{proof}

The following result about Cayley digraphs is a special case of \cite[Lemma 3.1]{Bab77}:

\begin{lem}\label{B}
The following are equivalent for every Cayley digraph $\cay(G,S)$.
\begin{enumerate}[(i)]
\item $\cay(G,S)$ is a CI-graph.
\item Every two regular subgroups of $\aut(\cay(G,S)),$ isomorphic to $G,$ are 
conjugate  in $\aut(\cay(G,S))$.
\end{enumerate}
\end{lem}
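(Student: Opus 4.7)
The plan is to exploit the correspondence between Cayley presentations of $\Gamma=\cay(G,S)$ and regular subgroups of $\aut(\Gamma)$ isomorphic to $G$, together with the holomorph description of the normalizer $N_{\sym(G)}(\hat G)$: a permutation $\psi$ of $G$ normalizes $\hat G$ if and only if $\psi(x)=\alpha(x)\cdot\psi(1_G)$ for a unique $\alpha\in\aut(G)$. This fact and the observation that any regular permutation group is determined up to relabelling by its abstract isomorphism type will carry both implications.

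For $(\mathrm{i})\Rightarrow(\mathrm{ii})$, let $K\le\aut(\Gamma)$ be regular with $K\cong G$. I would fix an abstract isomorphism $\theta:G\to K$ and define a bijection $f:G\to G$ by $f(g)=1_G^{\theta(g)}$. A short direct computation yields $f\hat g f^{-1}=\theta(g)$ as permutations of $G$, and consequently $f\hat G f^{-1}=K$. Using $f$ to relabel the vertices, the digraph $\Gamma$ pulls back to a digraph $\Gamma'$ on $G$ whose automorphism group contains $\hat G$ acting regularly, forcing $\Gamma'=\cay(G,T)$ for some $T\subseteq G$ with $f:\cay(G,T)\to\Gamma$ a graph isomorphism. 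Hypothesis (i) then gives $T=S^\alpha$ for some $\alpha\in\aut(G)$; viewing $\alpha$ as a permutation of $G$, it is itself an isomorphism $\cay(G,S)\to\cay(G,T)$ and normalizes $\hat G$, so the composite $f\alpha$ lies in $\aut(\Gamma)$ and satisfies $(f\alpha)\hat G(f\alpha)^{-1}=f\hat G f^{-1}=K$, which is (ii).

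For $(\mathrm{ii})\Rightarrow(\mathrm{i})$, suppose $\varphi:\cay(G,T)\to\Gamma$ is an isomorphism, and let $R_T$ denote the right regular representation acting on $\cay(G,T)$. Then $\varphi R_T\varphi^{-1}$ is a regular subgroup of $\aut(\Gamma)$ isomorphic to $G$, so by (ii) some $\sigma\in\aut(\Gamma)$ conjugates it to $\hat G$. The map $\psi:=\sigma\varphi$ is then an isomorphism $\cay(G,T)\to\Gamma$ lying in $N_{\sym(G)}(\hat G)$, hence by the normalizer description $\psi(x)=\alpha(x)\psi(1_G)$ for some $\alpha\in\aut(G)$. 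Tracing an arc $(1_G,t)$ of $\cay(G,T)$ through $\psi$ produces the arc $(\psi(1_G),\alpha(t)\psi(1_G))$ of $\Gamma$, forcing $\alpha(t)\in S$; thus $\alpha(T)\subseteq S$, and a valency comparison gives $\alpha(T)=S$, i.e.\ $T=S^{\alpha^{-1}}$, which is (i).

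The main obstacle is notational bookkeeping: keeping careful track of which copy of the right regular representation acts on which realization of the digraph, and distinguishing $f$ (a mere vertex-relabelling bijection) from a genuine automorphism of $\Gamma$. Once the holomorph description of $N_{\sym(G)}(\hat G)$ and the dictionary ``regular subgroup of $\aut(\Gamma)$ isomorphic to $G$ $\longleftrightarrow$ Cayley presentation of $\Gamma$ on $G$'' are in place, there is no further conceptual difficulty.
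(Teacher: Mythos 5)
Your proof is correct. Note that the paper itself gives no proof of this lemma -- it is quoted as a special case of Babai's result \cite[Lemma 3.1]{Bab77} -- and your argument is a faithful reconstruction of Babai's standard proof: the dictionary between regular subgroups of $\aut(\Gamma)$ isomorphic to $G$ and Cayley presentations of $\Gamma$ on $G$ in one direction, and the holomorph description of $N_{\sym(G)}(\hat{G})$ in the other. The only blemish is notational: you mix left-function notation ($f(g)$, $\psi(x)$) with the paper's right-exponent convention ($1_G^{\theta(g)}$), so the identity $f\hat{g}f^{-1}=\theta(g)$ versus $f^{-1}\hat{g}f=\theta(g)$ depends on which composition order is meant; this does not affect the substance of the argument.
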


We prove next an analog of the previous lemma for cubic bi-Cayley graphs on groups $G \in \Cs$. 
For a permutation group $H \le \sym(G \times \{1,0\}),$ we denote by $\G(H)$ the set of all semiregular subgroups of $H$ whose orbits are $(G,0)$ and $(G,1)$. 

\begin{lem}\label{B-type}
The following are equivalent for every bi-Cayley graph $\Gamma = \bcay(G,S),$ where $G \in \Cs$ and $|S|=3$.
\begin{enumerate}[(i)]
\item $\bcay(G,S)$ is a BCI-graph. 
\item  Every two subgroups in $\G(\aut(\Gamma)),$ 
isomorphic to $G,$ are conjugate in $\aut(\Gamma)$.
\end{enumerate}
\end{lem}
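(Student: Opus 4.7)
The plan is to prove the two directions separately, with the bipartition $(G,0),(G,1)$ of $V(\Gamma)$ playing the central role and Lemma~\ref{tauX} supplying the key automorphism $\tau_{\hat{G}}$ that both normalizes $\hat{G}$ and swaps the two parts of the bipartition.

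For (i)~$\Rightarrow$~(ii), I start from an arbitrary $X\in\G(\aut(\Gamma))$ with $X\cong G$, fix a group isomorphism $\alpha\colon G\to X$, and use the semiregularity of $X$ to parametrize its two orbits by $G$ through the rule $(1_G,i)^{\alpha(g)}\mapsto(g,i)$. The resulting bijection $\Phi\colon V(\Gamma)\to G\times\{0,1\}$ satisfies $\Phi X\Phi^{-1}=\hat{G}$ by construction, and the $\Phi$-images of the neighbors of $(1_G,0)$ exhibit $\Phi$ as a graph isomorphism from $\Gamma$ onto $\bcay(G,S')$ for some $S'\subseteq G$ of size $3$. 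Hypothesis (i) applied to $\bcay(G,S)\cong\bcay(G,S')$ gives $S'=g_0 S^{\alpha_0}$ for some $g_0\in G$ and $\alpha_0\in\aut(G)$, and from these data one writes down an explicit bi-Cayley isomorphism $\Psi\colon\bcay(G,S')\to\Gamma$ for which a direct calculation yields $\Psi\hat{G}\Psi^{-1}=\hat{G}$. Hence $\Psi\Phi\in\aut(\Gamma)$ conjugates $X$ to $\hat{G}$, giving (ii).

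For (ii)~$\Rightarrow$~(i), let $\Psi\colon\bcay(G,T)\to\Gamma$ be any graph isomorphism. Since both graphs are bipartite with classes $(G,0),(G,1)$, I may assume $\Psi$ either preserves or swaps these classes, so $X:=\Psi\hat{G}\Psi^{-1}\in\G(\aut(\Gamma))$ and $X\cong G$. Hypothesis (ii) provides $\sigma\in\aut(\Gamma)$ with $\sigma\hat{G}\sigma^{-1}=X$, and Lemma~\ref{tauX} applied to $\hat{G}\in\Cs$ supplies $\tau=\tau_{\hat{G}}\in\aut(\Gamma)$ that normalizes $\hat{G}$; since $\langle\hat{G},\tau\rangle$ is regular on $V(\Gamma)$ while $\hat{G}$ has two orbits, $\tau$ necessarily swaps the bipartition. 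Replacing $\sigma$ by $\sigma\tau$ if needed---still a conjugator of $\hat{G}$ to $X$ because $\tau$ normalizes $\hat{G}$---I arrange that $\eta:=\sigma^{-1}\Psi$ both preserves the bipartition and normalizes $\hat{G}$. The latter yields $\theta\in\aut(G)$ and $a,b\in G$ with $\eta(x,0)=(ax^\theta,0)$ and $\eta(x,1)=(bx^\theta,1)$, and comparing the neighborhoods of $(1_G,0)$ in $\bcay(G,T)$ with those of $(a,0)$ in $\Gamma$ gives $bT^\theta=Sa$. Rewriting $Sa=aS^{\gamma_a}$ for the inner automorphism $\gamma_a\colon x\mapsto a^{-1}xa$ and applying $\theta^{-1}$ converts this to $T=gS^\beta$ with $g=(b^{-1}a)^{\theta^{-1}}\in G$ and $\beta=\gamma_a\theta^{-1}\in\aut(G)$, which is the required form.

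The principal obstacle is the possibility that $\eta$ swaps rather than preserves the bipartition, for the analogous calculation in that case would deliver $T=g(S^{-1})^\beta$, which cannot in general be rearranged into the form $T=gS^\beta$ demanded by the BCI property (for instance, $S$ and $S^{-1}$ need not be related by a single automorphism of $G$ together with a left translation). Lemma~\ref{tauX}---whose applicability to $\hat{G}$ is precisely where the hypothesis $G\in\Cs$ enters---eliminates this obstruction by supplying a bipartition-swapping automorphism of $\Gamma$ that also normalizes $\hat{G}$, so that the swap can always be absorbed into the choice of $\sigma$. With that adjustment available, the remainder is a routine manipulation of connection sets and inner automorphisms.
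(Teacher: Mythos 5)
Your strategy in both directions coincides with the paper's: you transport the problem along a bijection of $G\times\{0,1\}$ conjugating the given semiregular group to $\hat{G}$, and you use $\tau_{\hat{G}}$ from Lemma~\ref{tauX} both to swap the bipartition and to keep $\hat{G}$ normalized. The computations you indicate (the holomorph of the right regular representation on one class, absorbing the right translation into an inner automorphism) are correct, and your $(i)\Rightarrow(ii)$ direction is sound.

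There is, however, a genuine gap in $(ii)\Rightarrow(i)$, at the step ``since both graphs are bipartite with classes $(G,0),(G,1)$, I may assume $\Psi$ either preserves or swaps these classes.'' This is automatic only for connected graphs, where the bipartition is unique up to swapping. Here $S$ need not generate $G$ (e.g. $G=\Z_9$ and $S$ the subgroup of order $3$, giving three disjoint copies of $K_{3,3}$), so $\bcay(G,T)$ and $\Gamma$ may be disconnected, and an arbitrary isomorphism $\Psi$ can send the $0$-half of some components to $0$-halves and of other components to $1$-halves. In that case the orbits of $\Psi\hat{G}\Psi^{-1}$ are neither $(G,0)$ and $(G,1)$ nor their swap, so this group does not lie in $\G(\aut(\Gamma))$, hypothesis (ii) cannot be applied to it, and your argument does not get started. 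The desired normalization of $\Psi$ is true but requires proof: one must note that every component of $\Gamma$ is isomorphic to the connected graph $\bcay(H,s^{-1}S)$ with $H=\langle s^{-1}S\rangle$ (and similarly for $\bcay(G,T)$), and assemble a global class-preserving isomorphism from a single component isomorphism --- equivalently, correct $\Psi$ on each offending component by a half-swapping automorphism of that component, which Lemma~\ref{tauX} applied to $\bcay(H,s^{-1}S)$ provides. This is precisely the component-by-component construction that occupies a substantial part of the paper's proof of this implication; once it is supplied, the remainder of your argument goes through.
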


\begin{proof} 
We start with the part $(i) \Rightarrow (ii)$. 
Let $X \in \G(\aut(\Gamma))$ such that $X \cong G$. We have to show that 
$X$ and $\hat{G}$ are conjugate in $\aut(\Gamma)$. Let $i \in \{0,1\},$ and set $X^{(G,i)}$ and $\hat{G}^{(G,i)}$ 
for the permutation groups of the set $(G,i)$ induced by $X$ and $\hat{G}$ respectively. 
The groups $X^{(G,i)}$ and $\hat{G}^{(G,i)}$  are conjugate in $\sym((G,i)),$ because these are isomorphic and regular 
on $(G,i)$.  Thus $X$ and $\hat{G}$ are conjugate by 
a permutation $\phi \in \sym(G \times \{0,1\})$ such that $(G,0)$ is $\phi$-invariant, and we write $X = \phi \hat{G} \phi^{-1}$. 
Consider the graph $\Gamma^\phi,$ the image of $\Gamma$ under $\phi$. Then 
$\hat{G} = \phi^{-1} X \phi \le \aut(\Gamma^\phi)$. Using this and that $(G,0)$ is $\phi$-invariant, we obtain that 
$\Gamma^\phi = \bcay(G,T)$ for some subset $T \subseteq G$. Then $\Gamma \cong \bcay(G,T),$ and by (i), 
 $T = g S^\alpha$ for some $g \in G$ and $\alpha \in \aut(G)$. Define the permutation $\sigma$ of $G \times \{0,1\}$ by 
$$
(x,i)^\sigma = \begin{cases} (x^\alpha,0) & \text{ if } i=0, \\ (g x^\alpha,1) & \text{ if } i=1.  \end{cases}
$$
Notice that, $\sigma$ normalizes $\hat{G}$. 
The vertex $(x,0)$ of $\bcay(X,S)$ has neighborhood $(Sx,1)$. These are mapped  by $\sigma$ to the 
vertex $(x^\alpha,0)$ and the set $(g S^\alpha x^\alpha,1) = (T x^\alpha,1)$. This proves that 
$\sigma$ induces an isomorphism from $\Gamma$ to $\Gamma^\phi,$ and it follows in turn that, 
$\Gamma^\phi = \Gamma^\sigma,$ $\phi \sigma^{-1} \in \aut(\Gamma),$ and thus 
$\phi = \rho \sigma$ for some $\rho \in \aut(\Gamma)$. Finally, 
$X = \phi \hat{G} \phi^{-1} = \rho \sigma \hat{G} \sigma^{-1} \rho^{-1} = \rho \hat{G} \rho^{-1},$
i.e., $X$ and $\hat{G}$ are conjugate in $\aut(\Gamma)$. \medskip

We turn to the part $(i) \Leftarrow (ii)$. 
Let $\Gamma' = \bcay(G,T)$ such that $\Gamma' \cong \Gamma$. We have to show that $T = g S^\alpha$ for some 
$g \in G$ and $\alpha \in \aut(G)$. 
We claim the existence of an isomorphism $\phi : \Gamma \to \Gamma'$ for which 
$\phi : \nn \mapsto \nn$ and $(G,0)$ is $\phi$-invariant (here $\phi$ is viewed as a permutation of $G \times \{0,1\}$). 
We construct $\phi$ in a few steps. To start with, choose an arbitrary isomorphism $\phi_1 : \Gamma \to \Gamma'$. 
Let $\tau_{\hat{G}}$ be the automorphism of $\Gamma'$ defined in Lemma \ref{tauX}. 
Since $\langle \hat{G},\tau_{\hat{G}} \rangle$ is regular on $V(\Gamma'),$   
there exists $\rho \in \langle \hat{G},\tau_{\hat{G}} \rangle$ which maps  $\nn^{\phi_1}$ to $\nn$.
Let $\phi_2 = \phi_1 \rho$. Then $\phi_2$ is an isomorphism from 
$\Gamma$ to $\Gamma',$ and also $\phi_2 : \nn \mapsto \nn$.  The connected component of 
$\Gamma$ containing the vertex $(x,0)$ is equal to the induced subgraph $\Gamma[(xH,0) \cup (sx H,1)],$ where 
$s \in S$ and $H \le G$ is generated by the set $s^{-1} S$. 
It can be easily checked that 
$$
\Gamma[(x H,0) \cup (s x H,1)] \cong \bcay(H,s^{-1} S).
$$ 
Similarly, 
the connected component of 
$\Gamma'$ containing the vertex $(x,0)$ is equal to the induced subgraph $\Gamma'[(x K,0) \cup (t x K,1)],$
where $t \in T$ and $K \le G$ is generated by the set $t^{-1} T,$ and 
$$
\Gamma'[(xK,0) \cup (tx K,1)] \cong \bcay(K,t^{-1} T).
$$ 
Since $\phi_2$ fixes $\nn,$ it induces an isomorphism from 
$\Gamma[(H,0) \cup (sH,1)]$ to $\Gamma[(K,0) \cup (t K,1)];$ denote this isomorphism by $\phi_3$. 
It follows from the connectedness of these induced subgraphs that 
$\phi_3$ preserves their bipartition classes, moreover, $\phi_3$ maps $(H,0)$  to $(K,0),$ since it fixes $\nn$.  
Finally, take $\phi : \Gamma \to \Gamma'$ to be the isomorphism whose restriction to each component of $\Gamma$ equals 
$\phi_3$. It is clear that $\phi : \nn \mapsto \nn$ and $(G,0)$ is $\phi$-invariant. 

Since $\hat{G} \le \Gamma',$ $\phi \hat{G} \phi^{-1} \le \aut(\Gamma)$. The orbit of $\nn$ under $\phi \hat{G} \phi^{-1}$ 
is equal to $(G,0)^{\phi^{-1}} = (G,0),$ and hence $\phi \hat{G} \phi^{-1} \in \G(\aut(\Gamma))$. 
By (ii), $\phi \hat{G} \phi^{-1} = 
\sigma^{-1} \hat{G} \sigma$ for some $\sigma \in \aut(\Gamma)$. 
By Lemma \ref{tauX}, the normalizer of $\hat{G}$ in 
$\aut(\Gamma)$ is transitive, implying that $\sigma$ can be chosen so that $\sigma : \nn \mapsto \nn$. 
To sum up, we have an isomorphism $(\sigma \phi) : \Gamma \mapsto \Gamma'$ which fixes 
$\nn$ and  also normalizes $\hat{G}$. Thus $(\sigma \phi)$ maps $(G,1)$ to itself. 
Let $G_{\text{right}} \le \sym(G,1)$ be the permutation group induced by the action of $\hat{G}$ on $(G,1)$. 
Then the permutation of $(G,1)$ induced by $(\sigma \phi)$  belongs to the holomorph of $G_{\text{right}}$ 
(cf. \cite[Exercise 2.5.6]{DixM96}), and therefore, there exist $g \in G$ and $\alpha \in \aut(G)$ such that 
$(\sigma \phi) : (x,1) \mapsto (g x^\alpha,1)$ for all $x \in G$.
On the other hand, being an isomorphism from $\Gamma$ to $\Gamma',$  $\sigma \phi$ maps  
$(S,1)$  to $(T,1)$. These give that $(T,1) = (S,1)^{\sigma \phi} = (g S^\alpha,1),$ i.e., $T=g S^\alpha$. 
\end{proof}

In the following lemma we connect the BCI-property with the CI-property.  

\begin{lem}\label{B-type-cor}
Let $\Gamma = \bcay(G,S), G \in \Cs,$ $|S|=3,$ and suppose, in addition, that  
$\aut(\Gamma)_\nn = \aut(\Gamma)_\ee$. Then 
$\cay(G,S)$ is a CI-graph $\iff$ $\bcay(G,S)$  is a BCI-graph. 
\end{lem}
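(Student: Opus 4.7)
\emph{Plan.} The strategy is to use the hypothesis $\aut(\Gamma)_\nn = \aut(\Gamma)_\ee$ to identify the bipartition--preserving subgroup $A^+ := \aut(\Gamma)_{\{(G,0)\}}$ with $\aut(\cay(G,S))$, and then translate the conjugacy criterion of Lemma \ref{B-type} into that of Lemma \ref{B}, and vice versa.

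The first step is to construct an isomorphism $\Phi \colon A^+ \to \aut(\cay(G,S))$. Conjugating the hypothesis by $\hat{y} \in \hat{G}$ upgrades it to $\aut(\Gamma)_{(y,0)} = \aut(\Gamma)_{(y,1)}$ for every $y \in G$. For $\beta \in A^+$ with $\beta(y,0) = (b,0)$, the element $\hat{b}^{-1} \beta \hat{y}$ fixes $\nn$ and therefore also $\ee$, which unwinds to $\beta(y,1) = (b,1)$. Hence $\beta$ acts on $(G,0)$ and $(G,1)$ by the same permutation of $G$, which I call $\Phi(\beta)$; checking the edge condition of $\Gamma$ shows $\Phi(\beta) \in \aut(\cay(G,S))$. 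Conversely, every $\alpha \in \aut(\cay(G,S))$ lifts to $(x,i) \mapsto (x^\alpha, i)$ in $A^+$, so $\Phi$ is an isomorphism, and it sends $\hat{G}$ onto the right regular representation of $G$.

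For the forward direction, suppose $\cay(G,S)$ is a CI-graph, and take any $X \in \G(\aut(\Gamma))$ with $X \cong G$. Since $X$ has orbits $(G,0)$ and $(G,1)$, we have $X \le A^+$, and $\Phi(X)$ is a regular subgroup of $\aut(\cay(G,S))$ isomorphic to $G$. Lemma \ref{B} gives the conjugacy of $\Phi(X)$ and $\Phi(\hat{G})$ in $\aut(\cay(G,S))$, whose pullback under $\Phi^{-1}$ conjugates $X$ to $\hat{G}$ inside $A^+ \le \aut(\Gamma)$; Lemma \ref{B-type} then delivers the BCI property.

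For the converse, let $Y \le \aut(\cay(G,S))$ be regular with $Y \cong G$, and set $Y^* := \Phi^{-1}(Y) \in \G(\aut(\Gamma))$. The BCI hypothesis and Lemma \ref{B-type} produce $\rho \in \aut(\Gamma)$ with $\rho \hat{G} \rho^{-1} = Y^*$. The main obstacle is that $\rho$ need not preserve the bipartition: since both $\hat{G}$ and $Y^*$ have orbits $(G,0)$ and $(G,1)$, $\rho$ either fixes or interchanges these two sets. In the swapping case I replace $\rho$ by $\rho \tau_{\hat{G}}$, where $\tau_{\hat{G}}$ is the involution supplied by Lemma \ref{tauX}: it swaps $(G,0)$ and $(G,1)$ and normalizes $\hat{G}$, so $\rho \tau_{\hat{G}}$ lies in $A^+$ and still conjugates $\hat{G}$ onto $Y^*$. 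Applying $\Phi$ then yields an element of $\aut(\cay(G,S))$ conjugating the right regular representation onto $Y$, and Lemma \ref{B} completes the proof.
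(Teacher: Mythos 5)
Your proposal is correct and follows essentially the same route as the paper: the paper uses the hypothesis $\aut(\Gamma)_\nn=\aut(\Gamma)_\ee$ to form the block system $\{\{(x,0),(x,1)\}:x\in G\}$ and identifies the induced (faithful) action of $A^+$ on $G$ with $\aut(\cay(G,S))$, which is exactly your isomorphism $\Phi$, and then chains Lemma \ref{B} and Lemma \ref{B-type} just as you do. The only cosmetic difference is where the passage between conjugacy in $\aut(\Gamma)$ and conjugacy in $A^+$ is handled (the paper does it up front via $A=A^+\rtimes\langle\tau_{\hat G}\rangle$ and the fact that $\tau_{\hat G}$ normalizes $\hat G$; you do it in the converse direction by adjusting $\rho$ with $\tau_{\hat G}$), which is the same idea.
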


\begin{proof}
Set $A = \aut(\Gamma)$ and $A^+ = A_{\{ (G,0) \}}$. Obviously, $X \le A^+$ for every 
$X \in \G(A)$. Let $\tau_{\hat{G}} \in A$ be the automorphism defined in Lemma \ref{tauX}.  
Then $A = A^+ \rtimes \langle \tau_{\hat{G}} \rangle$. By Lemma \ref{tauX}.(i), 
$\tau_{\hat{G}}$ normalizes $\hat{G}$, hence the conjugacy class of subgroups of $A$ containing $\hat{G}$ is equal to 
the conjugacy class of subgroups of $A^+$ containing $\hat{G}$. 
Using this and Lemma \ref{B-type}, (ii) follows if every  $X \in \G(A),$ isomorphic to $G,$ 
is conjugate to $\hat{G}$ in $A^+$.  

Let $\Delta = \{\nn,\ee\}$ and consider the setwise stabilizer $A_{\{\Delta\}}$. 
Since $A_\nn = A_\ee,$ $A_\nn \le A_{\{\Delta\}}$.
By \cite[Theorem 1.5A]{DixM96},  the orbit of $\nn$ under  $A_{\{\Delta\}}$ is a block for $A$. 
Since $\tau_{\hat{G}}$ switches $\nn$ and $\ee,$ this orbit is equal to $\Delta,$ and the system of blocks induced by $\Delta$ is 
$$ 
\delta = \{\Delta^{\hat{x}} : x \in G \} = \big\{ \, \{ (x,0), (x,1) \} : x \in G \, \big\}.
$$ 
This allows us to define the action of $A$ on $G$ by letting $x^\sigma = x'$ if $\sigma$ maps the block $\{(x,0), (x,1)\} $ to the block $\{(x',0), (x',1) \}$. We write $\bar{\sigma}$ for the image of $\sigma$ under the corresponding  permutation 
representation, and let $\bar{X} = \{ \bar{\sigma} : \sigma \in X \}$ for a subgroup $X \le A$. 
Notice that, the subgroup $A^+ < A$ is faithful in this action. In particular, two subgroups 
$X$ and $Y$ of $A^+$ are conjugate in $A^+$ exactly when $\bar{X}$ and $\bar{Y}$ are conjugate in $\bar{A^+}$. 
Also, for every $X \le A^+,$
$X  \in \G(A)$  and $X  \cong G$ if and only if $\bar{X}$ is regular on $G$ and $\bar{X} \cong G$.

We prove next that $\bar{A^+} = \aut(\cay(G,S))$. 
Pick an automorphism $\sigma \in A^+$ and an arc $(x,sx)$ of $\cay(G,S)$.
The edge $\{ (x,0),(sx,1)\}$ of $\Gamma$ is mapped by $\sigma$ to an edge $\{ (x',0),(s'x',1) \}$ for some 
$x' \in G$ and $s' \in S,$ hence $\bar{\sigma} :  x \mapsto x'$ and $s x \mapsto s' x',$ i.e.,  it 
maps the arc $(x,sx)$ to the arc $(x',s'x')$. We have just proved 
that $\bar{\sigma} \in \aut(\cay(G,S)),$ and hence $\bar{A^+} \le \aut(\cay(G,S))$.
In order to establish the relation ``$\ge$", for an arbitrary automorphism $\rho \in \aut(\cay(G,S)),$ define the permutation 
$\pi$ of $G \times \{0,1\}$ by 
$(x,i)^\pi = (x^\rho,i)$ for all $x \in G$ and $i \in \{0,1\}$. 
It is easily checked that $\pi \in A^+$ and $\bar{\pi} = \rho$. Thus $\bar{A^+} \ge \aut(\cay(G,S)),$ and so $\bar{A^+} = \aut(\cay(G,S))$. 

Now, the desired  equivalence follows along the following lines:
\begin{eqnarray*}
(i) & \stackrel{(a)}{\iff} & \text{every two regular subgroup of $\bar{A^+},$ isomorphic to $G,$ are conjugate in $\bar{A^+}$ } \\
     & \stackrel{(b)}{\iff} &  \text{every two subgroups in $\G(A),$ isomorphic to $G,$  are conjugate in $G$ } \\
     & \stackrel{(c)}{\iff}& (ii),
\end{eqnarray*}   
where (a) is Lemma \ref{B}, (b) is proved above, and (c) is Lemma \ref{B-type}.  
\end{proof}

Now it is easy to prove Theorem \ref{Main-non-AT}. \smallskip

\begin{proof}[Proof of Theorem \ref{Main-non-AT}]
Since $\Gamma$ is vertex-transitive (see Lemma \ref{tauX}), but not arc-transi- 
tive, we have 
$A_\nn = A_{(s,1)}$ for some $s \in S$.
We show below that $\bcay(G,s^{-1} S)$ is a BCI-graph, this obviously yields that the same holds for $\bcay(G,S)$. 
Define the permutation $\phi$ of $G \times \{0,1\}$ by 
$$
(x,i)^\phi = \begin{cases} (x,0) & \mbox{ if } i=0, \\ 
(s^{-1} x,1) & \mbox{ if } i=1.  \end{cases}
$$
We showed before that $\phi$ induces an isomorphism from $\Gamma$ to $\Gamma' = \bcay(G,s^{-1} S)$. 
Then we have $ \aut(\Gamma')_\nn = \phi^{-1} A_\nn \phi =  \phi^{-1} A_{(1,s)} \phi = \aut(\Gamma')_\ee$.
Thus Lemma \ref{B-type-cor} applies to $\Gamma',$ as a result, it is sufficient to show that 
$\cay(G,s^{-1} S)$ is a CI-graph. This follows because $|s^{-1} S \setminus \{1_G\}|=2$ and that $G$ is a 
$2$-DCI-group (see \cite[Theorem 1.3]{Li99}).  
\end{proof}

\section{Proof of Theorem \ref{Main}}

Let $\Gamma$ be an arbitrary finite graph and $G \le \aut(\Gamma)$ which is transitive on $V(\Gamma)$.
For a normal subgroup  $N \triangleleft G$ which is not transitive on $V(\Gamma),$ the \emph{quotient graph} $\Gamma_N$ 
is the graph whose vertices are the $N$-orbits on 
$V(\Gamma),$ and two $N$-orbits $\Delta_i, i=1,2,$ are adjacent if and only if there exist 
$v_i \in \Delta_i, i=1,2,$ which are adjacent in 
$\Gamma$. For a positive integer $s,$ an \emph{$s$-arc} of $\Gamma$ is an ordered $(s+1)$-tuple $(v_0,v_1,\ldots,v_s)$ of 
vertices of $\Gamma$ such that, for every $i \in \{1,\ldots,s\},$ $v_{i-1}$ is adjacent to $v_i,$ and for every 
$i \in \{1,\ldots,s-1\},$ $v_{i-1} \ne v_{i+1}$. The graph $\Gamma$ is called \emph{$(G,s)$-arc-transitive} 
(\emph{$(G,s)$-arc-regular}) if $G$ is transitive (regular) on 
the set of $s$-arcs of $\Gamma$. If $G = \aut(\Gamma),$ then a $(G,s)$-arc-transitive ($(G,s)$-arc-regular) graph is simply called 
\emph{$s$-transitive} (\emph{$s$-regular}). The proof of the following lemma is straightforward, hence it is omitted (it can 
be also deduced from \cite[Theorem 9]{Lor84}).

\begin{lem}\label{Gamma_N}
Let $\Gamma = \bcay(G,S)$ be a connected arc-transitive graph,  
$G$ be any finite group, $|S|=3,$ and $N < \hat{G}$  be a subgroup which is normal in $\aut(\Gamma)$. Then the following hold:
\begin{enumerate}[(i)]
\item  $\Gamma_N$ is a cubic connected arc-transitive graph.
\item $N$ is equal to the kernel of $\aut(\Gamma)$ acting on the set of $N$-orbits.
\item $\Gamma_N$ is isomorphic to a bi-Cayley graph of the group $\hat{G}/N$.
\end{enumerate} 
\end{lem}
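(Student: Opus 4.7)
The plan is to set up the quotient action carefully and then establish the three parts in the order (i), (ii), (iii). I will first observe that $\hat{G}$ acts semiregularly on $V(\Gamma) = G \times \{0,1\}$ with the two orbits $(G,0)$ and $(G,1)$, so $N \le \hat{G}$ is also semiregular, every $N$-orbit has size $|N|$, and $N$ preserves the bipartition of $\Gamma$. I write $\pi : V(\Gamma) \to V(\Gamma_N)$ for the quotient map and let $K$ denote the kernel of the action of $\aut(\Gamma)$ on the set of $N$-orbits; clearly $N \le K$ and $K \triangleleft \aut(\Gamma)$.

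For (i), the connectedness of $\Gamma_N$ is inherited from $\Gamma$ in the standard way. The next step is to show that $\aut(\Gamma)/K$ acts arc-transitively on $\Gamma_N$: given two arcs $(\bar v_1,\bar u_1), (\bar v_2,\bar u_2)$, pick representatives $v_i \in \pi^{-1}(\bar v_i)$ and, using $N$-transitivity on the fibres, a neighbour $u_i \in \Gamma(v_i) \cap \pi^{-1}(\bar u_i)$, then take $\sigma \in \aut(\Gamma)$ with $\sigma(v_1,u_1) = (v_2,u_2)$ and observe that $\sigma K$ sends $(\bar v_1,\bar u_1)$ to $(\bar v_2,\bar u_2)$. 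The crucial consequence is that the multiplicity $k := |\Gamma(v) \cap \pi^{-1}(\bar u)|$ is the same positive integer for every arc $(\bar v,\bar u)$ of $\Gamma_N$; combined with $\sum_{\bar u \sim \bar v} k = 3$, this forces either $k = 1$ (in which case $\Gamma_N$ is cubic) or $k = 3$. The second alternative would make every vertex of $\Gamma_N$ have degree one, so by connectedness $\Gamma_N$ would be a single edge, whence $N$ would have only two orbits and $N = \hat{G}$, contradicting $N < \hat{G}$. Hence $\Gamma_N$ is a connected cubic arc-transitive graph.

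For (ii), the inclusion $N \le K$ has been noted. For the reverse, I would take $k \in K$ and a vertex $v$; by semiregularity of $N$ there is a unique $n_v \in N$ with $kv = n_v v$, and then $k' := n_v^{-1}k \in K$ fixes $v$. For any $u \in \Gamma(v)$, the image $k' u$ is a neighbour of $v$ lying in $Nu$; but the $k=1$ conclusion of (i) gives $\Gamma(v) \cap Nu = \{u\}$, so $k' u = u$. Propagating along paths and using connectedness of $\Gamma$ yields $k' = 1$, so $k = n_v \in N$, establishing $K = N$.

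For (iii), normality $N \triangleleft \aut(\Gamma)$ implies $N \triangleleft \hat{G}$, so $\hat{G}/N$ acts on the $N$-orbits. This action is induced from the semiregular action of $\hat{G}$ and is therefore itself semiregular; by (ii) it is also faithful. It has exactly two orbits, namely the sets of $N$-orbits contained in $(G,0)$ and in $(G,1)$ respectively, and these have equal size. Any semiregular subgroup with two orbits of equal size on the vertex set of a graph realises that graph as a bi-Cayley graph, the connection set being read off from the neighbours of a chosen base vertex. The main obstacle is the cubicness step in (i); once the multiplicity-and-arc-transitivity argument rules out a proper quotient collapsing to a single edge, parts (ii) and (iii) follow by direct verification.
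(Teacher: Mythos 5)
Your argument is correct and is precisely the standard quotient-graph argument that the paper declines to write out (it states the proof is straightforward and can be deduced from Lorimer's Theorem~9): lift arcs to get arc-transitivity of $\Gamma_N$, use the constant fibre-multiplicity $k$ with $k\cdot\mathrm{val}(\Gamma_N)=3$ to rule out $k=3$ via $N<\hat{G}$, and propagate fixed points along paths for the kernel claim. One small overstatement in part (iii): under this paper's definition, a graph admitting a semiregular group with two equal orbits is a bi-Cayley graph only if every edge joins the two orbits; that does hold here because $\Gamma$ is bipartite with parts $(G,0)$ and $(G,1)$ and $N\le\hat{G}$ preserves them, so each edge of $\Gamma_N$ joins an $N$-orbit in $(G,0)$ to one in $(G,1)$ and the construction goes through.
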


\begin{rem}\label{Gamma_N-rem}
Let $\Gamma$ and $N$ be as described in Lemma \ref{Gamma_N}.
The group $\aut(\Gamma)$ acts on the set of $N$-orbits, i.e., on the vertex set $V(\Gamma_N)$. 
Lemma \ref{Gamma_N}.(ii) implies that, the  induced permutation group on $V(\Gamma_N)$ is isomorphic to 
$\aut(\Gamma)/N,$ and therefore, by some abuse 
of notation, this permutation group will also be denoted by $\aut(\Gamma)/N$.  
In what follows we shall write $\aut(\Gamma)/N \le \aut(\Gamma_N)$. Also note that,
 if $\Gamma$ is $s$-transitive, then 
$\Gamma_N$ is $(\aut(\Gamma)/N,s)$-arc-transitive. 
\end{rem}

The proof of Theorem \ref{Main} will be based on three lemmas about cubic connected arc-transitive 
bi-Cayley graphs, to be proved  below.  In order to simplify the formulations, we keep 
the following notation in all lemmas: 
\begin{enumerate}
\item[$(\ast)$] \ 
$\Gamma = \bcay(G,S)$  is a connected arc-transitive graph, where $G \in \Cs$ and $|S|=3$. 
\end{enumerate}

\begin{lem}\label{AT1}
With notation $(\ast),$ let $\delta$ be a system of blocks for $\aut(\Gamma)$ induced by a block properly contained in $(G,0),$
and $X$ be in $\G(\aut(\Gamma))$ such that $X \in \Cs$.  Then $A_\delta <  X$.
\end{lem}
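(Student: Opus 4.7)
The plan is to show that $A_\delta$ coincides with $Y := X_{\{\Delta\}}$, the setwise stabiliser of $\Delta$ in $X$. Because $X$ is regular on $(G,0)$ and $\Delta$ is a block, $|Y| = |\Delta|$, so the conclusion $A_\delta = Y < X$ will follow at once from the hypothesis $\Delta \subsetneq (G,0)$.

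The first step is to establish $A_\delta \cap X = Y$. The key ingredient is that $X \in \Cs$ forces every subgroup of $X$ to be normal in $X$: writing $X \le U \times V \in \C$ with $\gcd(|U|,|V|) = 1$ and both factors Dedekind, every subgroup of $U \times V$ has the form $W_U \times W_V$ and so is normal, and the property descends to $X$. Thus $Y \triangleleft X$, whence $Y$ fixes every $X$-translate of $\Delta$ setwise, i.e.\ every block of $\delta$ inside $(G,0)$. To treat the blocks inside $(G,1)$ I would invoke the automorphism $\tau_X$ of Lemma \ref{tauX}: it swaps the two $X$-orbits and normalises $Y$, so for each $y \in Y$,
\[
y \cdot \tau_X \Delta = \tau_X(\tau_X^{-1} y \tau_X) \Delta = \tau_X \Delta,
\]
i.e.\ $Y \le X_{\{\tau_X\Delta\}}$; since both subgroups have order $|\Delta|$, they coincide, and then normality of $Y$ in $X$ combined with transitivity of $X$ on the blocks of $\delta$ inside $(G,1)$ shows that $Y$ fixes every such block. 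Hence $Y \le A_\delta$, and the reverse inclusion $A_\delta \cap X \le Y$ is immediate.

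The main obstacle is upgrading $A_\delta \cap X = Y$ to $A_\delta \subseteq X$. For $g \in A_\delta$, regularity of $Y$ on $\Delta$ furnishes $y \in Y$ with $y\nn = g\nn$; then $y^{-1}g \in A_\delta \cap A_\nn$, so the problem reduces to proving $A_\delta \cap A_\nn = 1$. Fix $g' \in A_\delta \cap A_\nn$. Arc-transitivity of $\Gamma$ makes $A_\nn$ transitive on the three neighbours of $\nn$ in $(G,1)$, and the partition these three inherit from $\delta$ is $A_\nn$-invariant, so they either all lie in one block $\Delta_1$ or in three distinct blocks. The single-block case is impossible by connectedness: $Y$-regularity on $\Delta$ together with $Y\cdot\Delta_1 = \Delta_1$ forces every vertex of $\Delta$ to have all three neighbours in $\Delta_1$, a symmetric argument gives the same at $\Delta_1$, and so $\Delta \cup \Delta_1$ would be a proper connected component of $\Gamma$, contradicting $\Delta \subsetneq (G,0)$ together with connectedness. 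In the three-blocks case each neighbour of $\nn$ is the unique neighbour of $\nn$ in its block, so $g'\in A_\delta$ must fix all three; by vertex-transitivity the same three-blocks configuration holds at every vertex, so the $g'$-fixed set is closed under taking neighbours, and connectedness of $\Gamma$ then forces $g' = 1$.
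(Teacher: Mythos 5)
Your proof is correct and follows essentially the same route as the paper's: identify $Y = X \cap A_{\{\Delta\}}$, observe $|Y|=|\Delta|$, use normality of $Y$ in $\langle X,\tau_X\rangle$ to show $Y$ fixes every block and hence $Y \le A_\delta$, and conclude from semiregularity of $A_\delta$. The only difference is that the paper simply cites as known the semiregularity of the kernel $A_\delta$ for a connected cubic graph whose block system has more than two blocks, whereas you prove it in-line via the neighbour-partition argument (one block versus three distinct blocks); that argument is sound.
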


\begin{proof} 
Set $A = \aut(\Gamma)$. Let $Y = X \cap A_{\{ \Delta \}},$ where $\Delta \in \delta$ with $\Delta \subset (G,0)$.  
Then $\Delta$ is equal to an orbit of $Y,$ and $|Y|=|\Delta|$ because $\Delta \subset (G,0)$ and 
$X$ is regular on $(G,0)$. Formally, $\Delta = \orb_Y(v)$ for some vertex $v \in \Delta$. 

Let $\tau_X \in A$ be the automorphism defined in Lemma \ref{tauX}, and set  $L = \langle X,\tau_X \rangle$. The group 
$L$  is regular on $V(\Gamma),$ and $Y \trianglelefteq L$.   These yield
$$
\delta = \{  \Delta^l : l \in L \} = \{ \orb_Y(v)^l : l \in L \} = \{ \orb_Y(v^l) : l \in L \}.
$$ 
From this $Y \le A_\delta$. Since $\delta$ has more than $2$ blocks, and $\Gamma$ is a connected and cubic graph, it  is known 
that $A_\delta$ is semiregular. These imply that $A_\delta = Y < X$. 
\end{proof}
 
\begin{cor}\label{AT1-cor}
With notation $(\ast),$ let $N < \hat{G}$ be normal in $\aut(\Gamma),$ and 
$X$ be in $\G(\aut(\Gamma))$ such that $X \in \Cs$. Then $N < X$.   
\end{cor}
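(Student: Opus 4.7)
The plan is to deduce the corollary directly from Lemma \ref{AT1} by producing an appropriate system of blocks out of $N$. Set $A = \aut(\Gamma)$ and let $\delta$ be the partition of $V(\Gamma)$ into $N$-orbits. Since $N$ is normal in $A$, the $N$-orbit $\Delta$ containing $\nn$ is a block for $A$, and for every $g \in A$ one has $\Delta^g = N \cdot \nn^g = $ the $N$-orbit of $\nn^g$; thus the system of blocks for $A$ induced by $\Delta$ is exactly $\delta$. This identification is the bridge between the $A_\delta$ appearing in Lemma \ref{AT1} and the kernel appearing in Lemma \ref{Gamma_N}(ii).

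Next I would verify the hypothesis of Lemma \ref{AT1} that $\Delta$ is properly contained in $(G,0)$. Because $N \le \hat G$ and $\hat G$ has orbits $(G,0)$ and $(G,1)$, the orbit $\Delta$ is contained in $(G,0)$; and because $\hat G$ is regular on $(G,0)$ while $N < \hat G$ strictly, one has $|\Delta| = |N| < |\hat G| = |(G,0)|$, so the containment is proper. Applied to this $\delta$ (and to the given $X \in \G(A) \cap \Cs$), Lemma \ref{AT1} delivers $A_\delta < X$.

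Finally, the hypotheses of Lemma \ref{Gamma_N} are exactly those of the corollary, so part (ii) of that lemma identifies $N$ with the kernel of the action of $A$ on $\delta$, i.e.\ $N = A_\delta$. Combining the two gives $N = A_\delta < X$, which is the claim. The main obstacle is essentially absent: all substantive work has already been absorbed into Lemmas \ref{Gamma_N} and \ref{AT1}, and the only care required is to make sure the system of blocks induced by $\Delta$ truly coincides with the $N$-orbit partition, so that the ``kernel'' of Lemma \ref{Gamma_N}(ii) and the $A_\delta$ of Lemma \ref{AT1} refer to the same subgroup.
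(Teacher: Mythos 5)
Your proposal is correct and follows the same route as the paper: form the system of blocks $\delta$ from the $N$-orbits, identify $N = A_\delta$ via Lemma \ref{Gamma_N}(ii), and apply Lemma \ref{AT1}. You simply spell out the details the paper leaves implicit (that the $N$-orbit of $\nn$ induces exactly the partition $\delta$ and is properly contained in $(G,0)$ because $N < \hat{G}$ is semiregular), which is a faithful expansion of the same argument.
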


\begin{proof} 
Let $\delta$ be the system of blocks for $\aut(\Gamma)$ consisting of the $N$-orbits. 
Then $A_\delta = N$ by Lemma \ref{Gamma_N}.(ii), and the corollary follows directly from Lemma \ref{AT1}. 
\end{proof}  

We denote by $Q_3$ the graph of the cube and by $\H$  the Heawood graph. 
Recall that, the \emph{core} of a subgroup $H \le K$ in the group $K$ is the largest normal subgroup of $K$ contained in $H$. 

\begin{lem}\label{AT2}
With notation $(\ast),$ suppose that $\hat{G}$ is not normal in $\aut(\Gamma),$ and let 
$N$ be the core of $\hat{G}$ in $\aut(\Gamma)$.
Then  $(\hat{G}/N,\Gamma_N)$ is isomorphic to one of the pairs 
$(\Z_3,K_{3,3}),$ $(\Z_4,Q_3),$ and $(\Z_7,\H)$. 
\end{lem}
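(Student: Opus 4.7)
Plan. Put $A = \aut(\Gamma),$ $K = A/N,$ and $H = \hat{G}/N$. By Lemma~\ref{Gamma_N} and Remark~\ref{Gamma_N-rem}, $\Gamma_N$ is a cubic, connected, arc-transitive graph, and $K$ embeds as an arc-transitive subgroup of $\aut(\Gamma_N)$. Since $\hat{G}$ preserves each of $(G,0)$ and $(G,1),$ so does $N,$ whence the two $H$-orbits on $V(\Gamma_N)$ are exactly the bipartition classes of $\Gamma_N;$ in particular $H$ is semiregular with two orbits. The definition of $N$ as the core of $\hat{G}$ in $A$ forces $H$ to be core-free in $K,$ and since any normal subgroup of $\aut(\Gamma_N)$ contained in $H$ automatically lies in $K$ and is normal there, $H$ is in fact core-free in $\aut(\Gamma_N)$.

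The hypothesis $G \in \Cs$ then restricts $H$ sharply. Writing an ambient group $U \times V$ of $G$ in $\C,$ the usual Hall-decomposition argument shows that $G = G_1 \times G_2$ with $G_1 \le U$ abelian of odd order and $G_2 \le V,$ and this descends to $H = H_1 \times H_2$ with $H_1$ abelian of odd order and $H_2 \in \{1, \Z_{2^s}, \Z_2^s, \Q_8\}$.

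The heart of the argument is the classification of the admissible pairs $(H,\Gamma_N)$. Tutte's theorem on cubic arc-transitive graphs yields $|\aut(\Gamma_N)_v| \le 48,$ whence $[K:H] = 2|K_v| \le 96;$ by core-freeness the coset action of $K$ on $K/H$ is faithful, so $K \hookrightarrow \sym(K/H)$ has degree at most $96$. A case analysis on the Tutte $s$-value of $\Gamma_N$ and on the isomorphism type of $H,$ together with the factorisation $K^+ = H \cdot K_v$ (with $H \cap K_v = 1$) of the setwise stabilizer $K^+$ of a bipartition class in $K,$ should reduce the possibilities to a very short list, from which the three pairs $(\Z_3, K_{3,3}),$ $(\Z_4, Q_3),$ $(\Z_7, \H)$ can be read off and verified directly.

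The main obstacle is this final case analysis: every nontrivial odd-order factor $H_1,$ every non-cyclic possibility for $H_2$ (namely $\Z_2^r$ with $r \ge 2,$ and $\Q_8$), and every cyclic $H$ of order exceeding $7$ must be ruled out. I expect to handle it by combining the Tutte-type bounds above with explicit information on the automorphism groups of small cubic bipartite arc-transitive graphs, or equivalently by invoking an existing classification of cubic symmetric bi-Cayley graphs that admit a core-free two-orbit semiregular subgroup.
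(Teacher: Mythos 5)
Your setup is sound and coincides with the paper's: reduce to the case where $\hat G/N$ is core-free by passing to $\Gamma_N$, invoke Tutte's theorem to bound the vertex stabilizer, and embed the relevant group faithfully into the symmetric group on the cosets of $\hat G/N$. But everything after that is a promissory note, and it is precisely there that the entire difficulty of the lemma lives. The degree bound $K\hookrightarrow\sym(K/H)$ with $|K/H|\le 96$ is far too weak on its own: for $s=4$ or $5$ it permits, a priori, cyclic $H$ of large order, nontrivial odd part times nontrivial $2$-part, $\Z_2^r$, $\Q_8$, and so on, and no ``existing classification of cubic symmetric bi-Cayley graphs admitting a core-free two-orbit semiregular subgroup'' is cited or known to you that would finish the job. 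You have named the obstacle but not overcome it, so the proposal does not constitute a proof.

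For comparison, the paper closes the gap with a specific arsenal, one ingredient per case. For $s=3$ it writes $A^+=\hat G A_{\nn}=\hat G D_{12}$ and applies Huppert--It\^o to get solvability, then uses Lemma \ref{AT1} (trivial core implies $A^+$ primitive on a bipartition class) to force $G$ to be a $p$-group; the decisive trick for abelian $G$ is the explicit $6$-cycle $\nn\sim(s_1,1)\sim(s_2^{-1}s_1,0)\sim\cdots\sim\nn$, which caps the girth at $6$ and lets one quote the Conder--Nedela classification of cubic symmetric graphs of girth at most $6$ (plus a \texttt{Magma} check to kill the Pappus graph). For $s=4$ and $s=5$ the core-free hypothesis again gives bi-primitivity, and the paper invokes Li's classification of vertex-biprimitive $s$-transitive cubic graphs for $s\ge 4$, eliminating the sextet graphs via the dihedral Sylow $2$-subgroups of $PSL(2,p)$ versus $\Q_8$, and the standard double covers via an $S=S^{-1}$ argument that contradicts $\langle\pi_V(T)\rangle=V$ in $\Q_8$. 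None of these ideas (girth bound from commutativity, primitivity from Lemma \ref{AT1}, Huppert--It\^o, Li's biprimitive classification) appears in your plan, and without at least the girth argument and the biprimitive classification I do not see how your case analysis could terminate.
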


\begin{proof}
Set $A=\aut(\Gamma)$. Consider the quotient graph $\Gamma_N,$ and let  
$M \le \hat{G}$ such that $N \le M$ and $M/N \trianglelefteq \aut(\Gamma_N)$ (here $M/N \le A/N \le \aut(\Gamma_N),$ 
see Remark \ref{Gamma_N-rem}).
This implies in turn that, $M/N \trianglelefteq  A/N,$ $M \trianglelefteq A,$ and $M = N$. 
We conclude that, $\Gamma_N$ is a bi-Cayley graph of $\hat{G}/N,$ $\hat{G}/N$ is in $\Cs,$ and 
$\hat{G}/N$ has trivial core in $\aut(\Gamma_N)$. This shows that it is sufficient to prove Lemma \ref{AT2} in the particular case 
when $N$ is trivial. For the rest of the proof we assume that the core $N$ is trivial, and we write $N=1$.  % \medskip

By Tutte's theorem \cite{Tut47}, $\Gamma$ is $k$-regular for some $k \le 5$. 
Set $A^+ = \aut(\Gamma)_{\{(G,0)\}}$. Then $A = \langle A^+, \tau_{\hat{G}} \rangle,$ where 
$\tau_{\hat{G}} \in A$ is the automorphism defined in Lemma \ref{tauX}.  Let $M$ be the core of $\hat{G}$ in 
$A^+$. Then $M \trianglelefteq A,$ since $M$ is normalized by $\tau_{\hat{G}}$ by Lemma \ref{tauX}.(i), and 
$A = \langle A^+, \tau_{\hat{G}} \rangle$. Thus $M \le N =1,$  hence $M$ is also trivial. 
Consider $A^+$ acting on the set $[A^+:\hat{G}]$ of right $\hat{G}$-cosets in $A^+$. 
This action is faithful because $M$ is trivial. Equivalently, $G$ is embedded into 
$S_{3 \cdot 2^{k-1}-1},$ and we will write below that $G \le S_{3 \cdot 2^{k-1}-1}$.
Also, $A_\nn$ is determined uniquely by $k,$ and 
we have, respectively, $A_\nn \cong  \Z_3,$ or $S_3,$ or $D_{12},$ or $S_4,$ or $S_4 \times \Z_{12}$. 
We go through each case. \medskip

\noindent  CASE 1. $k=1$. \smallskip

This case can be excluded at once by observing that we have $G \le S_2$ by the above discussion,  
which  contradicts the obvious bound $|G| \ge 3$. \medskip 
 
\noindent CASE 2. $k=2$.  \smallskip 

In this case $G \le S_5$. Using also that $G \in \Cs,$ we see that $G$ is Abelian, hence $|G| \le 6,$ $|V(\Gamma)| \le 12$. 
We obtain by \cite[Table]{ConD02} that $\Gamma \cong Q_3,$ and $G \cong \Z_4$. \medskip

\noindent CASE 3. $k=3$.  \smallskip  

Then $A^+ = \hat{G} A_\nn = \hat{G} D_{12},$ a product of a nilpotent and a dihedral subgroup. Thus $A^+$ is solvable by 
Huppert-It\^{o}'s theorem (cf. \cite[13.10.1]{Sco64}). Since the core $N = 1,$ $A^+$ is primitive on $(G,0),$ see Lemma \ref{AT1}. Therefore, $G$ is a $p$-group. We see that $G$ is either Abelian or it is $\Q_8$. 
In the latter case $|V(\Gamma)|=16,$ and  
$\Gamma$ is isomorphic to the Moebius-Kantor graph, which is, however, $2$-regular (see \cite[Table]{ConD02}).
Therefore, $G$ is an Abelian $p$-group. Let $S = \{s_1,s_2,s_3\}$. Since $G$ is Abelian, for $\Gamma$ 
we have: 
$$
\nn \sim  (s_1,1)  \sim (s_2^{-1} s_1,0)  \sim (s_3 s_2^{-1} s_1,1) = (s_1 s_2^{-1} s_3,1) \sim (s_2^{-1} s_3,0) \sim 
(s_3,1) \sim \nn.
$$
Thus $\Gamma$ is of girth at most $6$. 
It was proved in \cite[Theorem 2.3]{ConN07} that the Pappus graph on $18$ points and the 
Deargues graph on $20$ points are the only 
$3$-regular cubic graphs of girth $6$. 
For the latter graph $|G|=10,$ contradicting that $G$ is a $p$-group. 
We exclude the former graph by the help of the computer package \texttt{Magma} \cite{BosCP97}.
We compute that the Pappus graph has no Abelian semiregular automorphism group  of order $9$ which 
has trivial core in the full automorphism group. Thus $\Gamma$  is of girth $4$ ($3$ and $5$ are impossible as the graph is bipartite). 
It is well-known that there are only two cubic arc-transitive graphs of girth $4$ (see also \cite[page 163]{KutM09}):  
$K_{3,3}$ and $Q_8$.  We get at once that  $\Gamma \cong K_{3,3}$ and $G \cong \Z_3$. \medskip

\noindent CASE 4. $k=4$.  \smallskip  

It is sufficient to show that $G$ is Abelian. 
Then by the above reasoning $\Gamma$ is of girth $6,$ and as the Heawood graph 
is the only cubic $4$-regular graph of girth $6$ (see \cite[Theorem 2.3]{ConN07}),  we get at once 
that  $\Gamma \cong \H$ and $G \cong \Z_7$.

Assume, towards a contradiction, that $G$ is non-Abelian. Thus $G = U \times V,$  where $U$ is an 
Abelian group of odd order, and $V \cong \Q_8$.  Since the core $N = 1,$ $A^+$ is primitive on $(G,0),$ 
see Lemma \ref{AT1}. In other words, $\Gamma$ is a $4$-transitive bi-primitive cubic graph. 
Two possibilities can be deduced from the list of $4$-transitive bi-primitive graphs given in \cite[Theorem 1.4]{Li01}:
\begin{itemize}
\item  $\Gamma$ is the standard double cover of a 
connected vertex-primitive cubic $4$-regular graph, in which case  $A  = 
A^+ \times \langle \eta \rangle$ for  an involution $\eta;$ or  
\item $\Gamma$ isomorphic to the sextet graph $S(p)$ (see \cite{BigH83}), where $p \equiv \pm 7(\text{mod }16),$  
in which case $A \cong PGL(2,p),$ and $A^+ \cong PSL(2,p)$.
\end{itemize}

The second possibility cannot occur, because then $A^+ \cong PSL(2,p),$ whose Sylow $2$-subgroup is a dihedral 
group (cf. \cite[Satz 8.10]{Hup67}), which contradicts that $V \le \hat{G} \le A^+,$ and $V \cong  \Q_8$.
It remains to exclude the first possibility. 
We may assume, by replacing $S$ with $x S$ for a suitable $x \in G$ if necessary, that 
$\eta$ switches $\nn$ and $\ee$. 
Since $\eta$ commutes with $\hat{G},$ we find $(x,0)^\eta = \nn^{\hat{x} \eta} = \nn^{\eta \hat{x}} = \ee^{\hat{x}}=(x,1)$ for every $x \in G$.  Let $s \in S$. Then $\nn \sim (s,1),$ hence $\ee = \nn^\eta \sim (s,0)^\eta = (s,1),$ 
which shows that $s \in S^{-1},$ and thus $S = S^{-1}$. Thus there exists $s \in S$ with $o(s) \le 2$. 
Put $T = s^{-1} S = s S$. Then $1_G \in T,$ and since $\Gamma$ is connected, $G = \langle T \rangle$. 
Notice that $s \in Z(G)$. This implies that $T^{-1} = T,$ and thus $\pi_V(T)$ satisfies 
$1_V \in \pi_V(T)$ and $\pi_V(T) = \pi_V(T)^{-1}$. Since $V \cong \Q_8,$ this implies that $\langle \pi_V(T) \rangle \ne V,$ 
a contradiction to $G = \langle T \rangle$. This completes the proof of this case. \medskip

\noindent CASE 5. $k=5$. \smallskip

In this case $\Gamma$ is a $5$-transitive bi-primitive cubic graph. 
It was proved in \cite[Corollary 1.5]{Li01} that 
$\Gamma$ is isomorphic to either the 
$P\Gamma L(2,9)$-graph on $30$ points (also known as the Tutte's 8-Cage), or the standards double 
cover of the $PSL(3,3).\Z_2$-graph on $468$ points. These graphs are of girth $8$ and $12$ respectively 
(see \cite[Table]{ConD02}). Also, in both cases $8 \nmid |G|,$ hence $G$ is Abelian. By this, however, 
$\Gamma$ cannot be of girth larger than $6$. This proves that this case does not  occur. 
\end{proof}

For a group $A$ and a prime $p$ dividing $|A|,$ we let $A_p$  denote a Sylow $p$-subgroup of $A$. 

\begin{lem}\label{AT3}
With notation $(\ast),$ let $X \in \G(\aut(\Gamma))$ such that $X \in \Cs$ and $X_2 \cong G_2$.
Then $X$ and $\hat{G}$ are conjugate in $\aut(\Gamma)$.   
\end{lem}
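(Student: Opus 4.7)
The plan is to set $A=\aut(\Gamma)$ and $N=\core_A(\hat G)$, then split according to whether $\hat G$ is normal in $A$.

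\textbf{Case I} ($\hat G \triangleleft A$). Here I would exploit the characteristic decomposition $\hat G = \hat U \times \hat V$ available from $\hat G \in \Cs$, where $\hat U$ is the Hall subgroup of odd order and $\hat V$ is the Sylow $2$-subgroup; both are characteristic in $\hat G$ and therefore normal in $A$. If both $\hat U$ and $\hat V$ are proper in $\hat G$ (equivalently, both nontrivial), applying Corollary \ref{AT1-cor} to each yields $\hat U,\hat V<X$, and since $\hat U \cap \hat V=1$ and $|\hat U||\hat V|=|G|=|X|$, it follows that $\hat G=\hat U\hat V\le X$, hence $X=\hat G$ by comparing orders. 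If one of $\hat U,\hat V$ is trivial, I would continue with finer characteristic subgroups of the surviving factor; in the odd case with more than one prime divisor, every Sylow $p$-subgroup of $\hat U$ is characteristic, so the same argument yields $\hat G\le X$. The residual configuration is when $\hat G$ is a $p$-group for a single prime, where characteristic-subgroup arguments alone do not suffice and one must invoke the rigidity of cubic arc-transitive bi-Cayley graphs admitting such a normal regular subgroup on each bipartition class.

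\textbf{Case II} ($\hat G \not\triangleleft A$). I would apply Lemma \ref{AT2} to conclude that $(\hat G/N,\Gamma_N)$ is one of $(\Z_3,K_{3,3})$, $(\Z_4,Q_3)$, $(\Z_7,\H)$. By Corollary \ref{AT1-cor}, $N<X$, and since by Lemma \ref{tauX}(i) every subgroup of $X$ is normal in $X$, we have $N \triangleleft X$. Thus $X/N$ descends to a semiregular subgroup of $A/N \le \aut(\Gamma_N)$ whose orbits are the two bipartition classes of $\Gamma_N$, and $|X/N|=|\hat G/N|$. For each of the three specific quotients I would carry out three steps: (a) identify $X/N$ up to isomorphism, which is automatic from the prime order in the $K_{3,3}$ and $\H$ cases, while in the $Q_3$ case one must rule out $X/N\cong \Z_2^2$ by using the hypothesis $X_2\cong G_2$, which via $(X/N)_2\cong X_2/N_2$ forces $X_2\cong \Z_{2^r}$ and thereby $X/N\cong \Z_4$; (b) show that $X/N$ and $\hat G/N$ are conjugate in $A/N$ by inspecting the small overgroup $\aut(\Gamma_N)\in\{S_3\wr S_2,\, S_4\times\Z_2,\, PGL(2,7)\}$, observing that the regular $\Z_p$-subgroups (or the regular $\Z_4$-subgroups in the $Q_3$ case) form a single conjugacy class; (c) lift the conjugacy to $A$: if $g\in A$ satisfies $\overline{gXg^{-1}}=\overline{\hat G}$ in $A/N$, then $gXg^{-1}\cdot N=\hat G\cdot N=\hat G$, so $gXg^{-1}$ is a subgroup of $\hat G$ of order $|G|=|\hat G|$, whence $gXg^{-1}=\hat G$.

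The principal obstacle is expected to be the residual subcase of Case I where $\hat G$ is a $p$-group for a single prime, since there Corollary \ref{AT1-cor} provides no useful proper characteristic subgroup and one must argue that such configurations either reduce to the Case II quotients $(K_{3,3},Q_3,\H)$ or do not occur at all; a secondary difficulty is the exclusion of $X/N\cong \Z_2^2$ in Case II's $Q_3$ subcase, which rests delicately on the Sylow $2$-hypothesis $X_2\cong G_2$ and on the classification of Sylow $2$-subgroups permitted in $\Cs$.
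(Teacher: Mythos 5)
Your architecture coincides with the paper's: the same split on whether $\hat G$ is normal in $A=\aut(\Gamma)$, the same use of Corollary \ref{AT1-cor} to force $N<X$ (so that $X/N$ acts on $\Gamma_N$), the same appeal to Lemma \ref{AT2} in the non-normal case, and the same use of the hypothesis $X_2\cong G_2$ to rule out $X/N\cong\Z_2^2$ when $\Gamma_N\cong Q_3$. Your Case II is essentially the paper's Case 1 and is sound: the paper settles the $(\Z_7,\H)$ pair by Sylow's theorem and the $(\Z_3,K_{3,3})$ and $(\Z_4,Q_3)$ pairs by a \texttt{Magma} check that the relevant semiregular subgroups form a single conjugacy class in $A/N$, and your step (c) lifting the conjugacy back to $A$ is exactly the (implicit) argument needed. (Two small remarks: $N\triangleleft X$ is immediate because $N=\core_A(\hat G)$ is normal in all of $A$, so you need not invoke Lemma \ref{tauX}(i), whose conclusion is about normality in $\langle X,\tau_X\rangle$ anyway; and the chain of implications for $Q_3$ runs $\hat G/N\cong\Z_4\Rightarrow G_2\cong\Z_{2^r}\Rightarrow X_2$ cyclic $\Rightarrow X/N$ cyclic.)

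The genuine gap is exactly where you flag it: the normal case with $G$ a $p$-group, which is the bulk of the paper's proof, and your two guesses about how it resolves (reduction to the quotients $K_{3,3}$, $Q_3$, $\H$, or vacuity) are both inaccurate. The paper argues as follows. For $p>3$, arc-transitivity and Tutte's theorem give $|A|=2|G|\cdot 3\cdot 2^{k-1}$ with $k\le 5$, so $\hat G$ and $X$ are Sylow $p$-subgroups of $A$ and Sylow's theorem finishes. For $p=2$ and $p=3$ one still uses proper characteristic subgroups $K<\hat G$ (normal in $A$, hence inside $X$ by Corollary \ref{AT1-cor}), but chosen so that $\Gamma_K$ is a specific small cubic arc-transitive graph: $\hat G/K\cong\Z_4$ with $\Gamma_K\cong Q_3$ when $G$ is a cyclic $2$-group; $\hat G/K\cong\Z_9$ or $\Z_9\times\Z_3$ with $\Gamma_K$ the Pappus graph or the unique cubic arc-transitive graph on $54$ points when $G$ is a $3$-group not of the form $\Z_{3^e}\times\Z_{3^e}$; and $\hat G/K\cong\Z_9\times\Z_9$ with $\Gamma_K$ the graph on $162$ points when $G\cong\Z_{3^e}\times\Z_{3^e}$, $e>1$. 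The residual small groups ($\Z_3$ and $\Z_3^2$ with $K_{3,3}$ and Pappus, $\Z_2^2$ with $Q_3$, $\Q_8$ with the Moebius--Kantor graph) are treated directly. Each configuration is then resolved by computation against the Conder--Dobcs\'anyi census and \texttt{Magma}. Note also that the hypothesis $X_2\cong G_2$ is indispensable in this normal case too, not only in your Case II: the Moebius--Kantor graph admits a semiregular cyclic group of order $8$ preserving the bipartition, so for $G\cong\Q_8$ the conclusion $X=\hat G$ fails without it. Until these reductions and verifications are carried out, the lemma is not proved.
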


\begin{rem}
We remark that, the assumption $X_2 \cong G_2$ cannot be deleted. The Moebius-Kantor graph 
is a bi-Cayley graph of the group $\Q_8,$ which has a semiregular cyclic group of automorphism of order $8$ 
which preserves the bipartition classes. 
\end{rem}

\begin{proof}  
Set $A=\aut(\Gamma)$. The proof is split  into two parts according to whether $\hat{G}$ is normal in $A$. 
\medskip

\noindent  CASE 1. $\hat{G}$ is not normal in $A$.  \smallskip  

Let $N$ be the core of $\hat{G}$ in $A$. By Corollary \ref{AT1-cor}, 
 $N < X \cap \hat{G}$. Therefore, it  is sufficient to show that 
\begin{equation}
\text{ $X/N$ and $\hat{G}/N$ are conjugate in $A/N$}. 
\end{equation}
Recall that, the group $A/N \le \aut(\Gamma_N)$ for the quotient graph $\Gamma_N$ induced by $N$ 
(see Remark  \ref{Gamma_N-rem} and the preceding paragraph). Both groups $X/N$ and $\hat{G}/N$ are 
semiregular whose orbits are the bipartition classes of $\Gamma_N$. 
Also notice that, $\hat{G}/N$ cannot be normal in $A/N,$ otherwise $\hat{G}$ were normal in $A$.

According to Lemma \ref{AT2},  $(\hat{G}/N,\Gamma_N) \cong (\Z_3,K_{3,3}),$ or $(\Z_4,Q_8),$ or $(\Z_7,\H)$. 
Thus (1) follows immediately from Sylow's theorem when $(\hat{G}/N,\Gamma_N) \cong (\Z_7,\H)$. 

Let $(\hat{G}/N,\Gamma_N) \cong (\Z_3,K_{3,3})$. Since $\hat{G}/N$ is not normal in $A/N,$ and $\Gamma_N$ is 
$(A/N,1)$-arc-transitive, we compute by \texttt{Magma} that $A/N = \aut(\Gamma_N),$ or it is a unique subgroup of
$\aut(\Gamma_N)$ of index $2$. In both cases $A/N$ has one conjugacy classes of semiregular subgroups whose orbits 
are the bipartition classes of $\Gamma_N$. Thus (1) holds. 

Let $(\hat{G}/N,\Gamma_N) \cong (\Z_4,Q_8)$. Since $X_2 \cong G_2,$ $N/X \cong \hat{G}/N \cong \Z_4$. 
Using this and that $\Gamma_N$ is $(A/N,1)$-arc-transitive, we compute by \texttt{Magma} that $A/N = \aut(\Gamma_N),$ 
and that $\aut(\Gamma_N)$ has one conjugacy classes of semiregular cyclic subgroups whose orbits 
are the bipartition classes of $\Gamma_N$. Thus (1) holds also in this cases. 
\medskip

\noindent  CASE 2. $\hat{G}$ is normal in $A$.  \smallskip  

We have to show that $X = \hat{G}$. Notice that, $X$ contains every proper subgroup 
$K < \hat{G}$ which is characteristic in $\hat{G}$. Indeed, since $\hat{G} \trianglelefteq A,$  $K \trianglelefteq A,$ 
and hence $K <  X$ follows by Corollary \ref{AT1-cor}. 
This property will be used often below. 

In particular, $\hat{G}_p \le G$ is characteristic for every prime $p$ dividing $|\hat{G}|$. 
Thus  $\hat{G}_p < X$ if $G$ is not a $p$-group, hence $X = \hat{G}$. 
Let $G$ be a $p$-group. If $p > 3,$ then both 
$\hat{G}$ and $X$ are Sylow $p$-subgroups of $A,$ and the statement follows by Sylow's theorem. 
Notice that, since $\Gamma$ is connected, $G$ is generated by the set $s^{-1} S$ for $s \in S,$ hence it 
is generated by two elements.  

Let $p=2$. Assume for the moment that $G$ is cyclic. Then $\hat{G}$ has a characteristic subgroup $K$ 
such that $\hat{G}/K \cong \Z_4$. Then $K \trianglelefteq A,$ 
$\Gamma_K \cong Q_3,$  $\Gamma_K$  is a bi-Cayley graph of $\hat{G}/K,$ which is 
 is normal in $A/K \le \aut(\Gamma_K)$. A simple computation shows that this situation does not occur. 
Let $G$ be a non-cyclic group in $\Cs$. Therefore, $G \cong \Z_2^2$ and 
$\Gamma \cong Q_3,$ or $G \cong \Q_8$ and  
$\Gamma$ is the Moebius-Kantor graph.  Now, $X = X_2 \cong G_2 = G$. Using this,  
$X = \hat{G}$ can be verified by the help of \texttt{Magma} in either case. 

Let $p = 3$. Observe first that $|G| > 3$. For otherwise, $\Gamma \cong K_{3,3},$ but no semiregular 
automorphism group of order $3$ is normal in $\aut(K_{3,3})$. 
Assume for the moment that $G \not\cong \Z_{3^e} \times \Z_{3^e}$ for any $e \ge 1$. 
In this case $\hat{G}$ has a characteristic subgroup $K$ 
such that $\hat{G}/K \cong \Z_9,$ or $\Z_9 \times \Z_3$. 
It follows that $K \trianglelefteq A,$ and $\Gamma_K$ is the Pappus graph, or 
the  unique cubic arc-transitive graph on $54$ points (see \cite[Table]{ConD02}). 
Moreover, it is a bi-Cayley graph of $\Z_9$ in the first, and of  
$\Z_9 \times \Z_3$ in the second case. However, we have checked by \texttt{Magma} that none of these is possible; and 
therefore, $G \cong \Z_{3^e} \times \Z_{3^e}$ for some $e \ge 1$.
If $e=1,$ then $G \cong \Z_3^2,$ and $\Gamma$ is the Pappus graph. 
However, this graph has no automorphism group which  is isomorphic to $\Z_3^2$ and also 
normal in the full automorphism group. Therefore, $e > 1,$ and thus the subgroup $K = \langle  \hat{x}^9 : x \in G \rangle$ is characteristic in $\hat{G}$ of index $81$. 
It follows that $K <  X,$ and $\Gamma_K$ is the unique cubic arc-transitive graph on $162$ points (see \cite[Table]{ConD02}). 
A direct computation gives that $X/K = \hat{G}/K,$ which together with $K  <  X \cap \hat{G}$ yield 
that $X = \hat{G}$. 
\end{proof}

Recall that, a group $H$ is \emph{homogeneous} if every isomorphism between two subgroups of $H$ can be extended to 
an automorphism of $H$. The following result  is \cite[Proposition 3.2]{Li99}: 

\begin{prop}\label{L}  
Every $2$-DCI-group is homogeneous. 
\end{prop}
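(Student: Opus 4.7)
The plan is to show that every isomorphism $\phi\colon K_1 \to K_2$ between subgroups of a $2$-DCI-group $G$ extends to an automorphism of $G$, proceeding by induction on $|K_1|$. The cyclic case serves both as base and as a recurring tool: since the $2$-DCI property subsumes $1$-DCI (singletons having size at most two), any two elements of $G$ of the same order lie in a common $\aut(G)$-orbit. Hence for $K_1 = \langle a \rangle$ there exists $\alpha \in \aut(G)$ with $\alpha(a) = \phi(a)$, and then $\alpha|_{K_1} = \phi$ because both homomorphisms agree on the generator $a$.

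For the inductive step with $K_1$ non-cyclic, the key observation is that $\phi$ induces an isomorphism of Cayley digraphs $\cay(G,S) \cong \cay(G,\phi(S))$ for every $S \subseteq K_1$: extend $\phi$ to a bijection of $G$ coset-wise by fixing any bijection between the coset spaces $G/K_1$ and $G/K_2$ and applying $\phi$ inside each coset, and check that this preserves arcs because $\phi$ respects left multiplication by elements of $S$ on $K_1$. Applying the $2$-DCI hypothesis to a generating pair $\{a, b\}$ of $K_1$ yields $\alpha \in \aut(G)$ with $\alpha(\{a,b\}) = \{\phi(a), \phi(b)\}$; in the matched case $\alpha(a) = \phi(a)$ and $\alpha(b) = \phi(b)$, the restriction $\alpha|_{K_1}$ coincides with $\phi$ and we are done.

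The main obstacle is the swap case $\alpha(a) = \phi(b),\ \alpha(b) = \phi(a)$, in which $\alpha|_{K_1}$ is a different isomorphism $K_1 \to K_2$. I would handle this by applying $2$-DCI a second time to an alternative generating pair such as $\{a, ab\}$, whose $\phi$-image is $\{\phi(a), \phi(a)\phi(b)\}$, exploiting that the swap in the new context forces a specific non-symmetric image pattern incompatible with the first; combining the two automorphisms obtained, together with the $1$-DCI property used to produce an element-level swap, one extracts an $\alpha' \in \aut(G)$ whose restriction to $K_1$ agrees with $\phi$. For $K_1$ requiring more than two generators, the induction proceeds by extending $\phi$ first on a proper subgroup $H_1 < K_1$ to some $\alpha_H \in \aut(G)$, replacing $\phi$ by $\alpha_H^{-1}\circ \phi$ to make it fix $H_1$ pointwise, and then incorporating each remaining generator via the two-generator argument applied to pairs $\{h, a\}$ with $h \in H_1$ and $a$ the new generator. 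The swap case throughout is the delicate technical point where the full force of the $2$-DCI hypothesis is deployed.
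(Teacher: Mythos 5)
The paper offers no proof of this proposition at all: it is imported verbatim as \cite[Proposition 3.2]{Li99}, so any argument you give is necessarily a different route from the paper's. Judged on its own merits, your proof has two genuine gaps. The decisive one is the swap case, which you acknowledge but do not actually close. If $\alpha(a)=\phi(b)$ and $\alpha(b)=\phi(a)$, then $\alpha|_{K_1}=\phi\circ\sigma_1$ where $\sigma_1\in\aut(K_1)$ interchanges $a$ and $b$, so recovering $\phi$ amounts to extending $\sigma_1$ to $G$ --- another instance of the homogeneity problem, and the argument is circular. Your second application of the $2$-DCI hypothesis to $\{a,ab\}$ does not break the circle: its bad outcome is $\beta(a)=\phi(ab)$, $\beta(b)=\phi(b)^{-1}$, i.e.\ $\beta|_{K_1}=\phi\circ\sigma_2$ for another nontrivial $\sigma_2\in\aut(K_1)$, and nothing forces the two bad outcomes to be ``incompatible.'' Concretely, take $G=\Z_2^4$ (a $2$-DCI-group by \cite[Theorem 1.3]{Li99}), $K_1=\langle e_1,e_2\rangle$, $K_2=\langle e_3,e_4\rangle$, $\phi(e_i)=e_{i+2}$. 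Both applications may legitimately return bad automorphisms, with $\sigma_1$ and $\sigma_2$ two distinct transpositions in $\aut(K_1)\cong S_3$; every word in $\alpha,\beta$ carrying $K_1$ to $K_2$ then restricts to $\phi\cdot\sigma$ with $\sigma$ ranging over a coset of $\langle\sigma_1^{-1}\sigma_2\rangle$ that misses the identity, so no combination of $\alpha$ and $\beta$ alone extends $\phi$. One needs genuinely new structural input about $G$ (here, that $\mathrm{GL}(4,2)$ contains an extension of $\sigma_1$), which your argument never produces.

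The second gap is in the induction for subgroups needing more than two generators: after normalizing so that $\phi$ fixes $H_1$ pointwise, the two-generator argument applied to $\{h,a\}$ only controls the resulting automorphism on $\langle h,a\rangle$; it may move other elements of $H_1$ and destroy the agreement already achieved, so the inductive invariant is not preserved. (This matters, since $\Z_2^r$ is a $2$-DCI-group and has subgroups requiring arbitrarily many generators.) What is sound in your proposal is the base case --- $2$-DCI implies $1$-DCI, hence elements of equal order are $\aut(G)$-conjugate --- and the observation that a subgroup isomorphism $\phi$ induces $\cay(G,S)\cong\cay(G,\phi(S))$ componentwise, which is exactly the right way to feed the CI-hypothesis. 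But to finish one has to exploit the known structure of $2$-DCI-groups (Sylow subgroups homocyclic, elementary abelian, or $\Q_8$), for which homogeneity can be verified directly; some input of this kind, as in Li's original proof, appears unavoidable.
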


Since every group  in $\C$ is a $2$-DCI-group (see \cite[Theorem 1.3]{Li99}), we have the corollary that every group in $\C$ is homogeneous.
Everything is prepared to complete the proof of Theorem \ref{Main}. \smallskip

\begin{proof}[Proof of Theorem \ref{Main}] 
Let $G \in \C$ and $\Gamma = \bcay(G,S)$ such that $|S| \le 3$. We have to show that $\Gamma$ is a BCI-graph. 
This holds trivially when $|S|=1,$ and follows from the homogeneity of $G$ when $|S|=2$.
Let $|S|=3$. The claim is proved in Theorem \ref{Main-non-AT} when $\Gamma$ is not arc-transitive. 
For the rest of the proof we assume that $\Gamma$ is an arc-transitive graph.  \medskip

Let $\bcay(G,S) \cong \bcay(G,T)$ for some subset $T \subseteq G$. We may assume without loss of generality that 
$1_G \in S \cap T$. Let $H = \langle S \rangle$ and $K = \langle T \rangle$. 
Then $H,K \in \Cs,$ both bi-Cayley graphs $\bcay(H,S)$ and $\bcay(K,T)$ are connected, and 
$\bcay(H,S) \cong \bcay(K,T)$.
We claim that $\bcay(H,S)$ is a BCI-graph. 
In view of Lemma \ref{B-type}, this holds if for every 
$X \in \G(\aut(\bcay(H,S))),$ isomorphic to $H,$ $X$ and $\hat{H}$ are conjugate in  $\aut(\bcay(H,S))$. 
Now this follows directly from Lemma \ref{AT3}. 

Let $\phi$ be an isomorphism from $\bcay(K,T)$ to $\bcay(H,S),$ and consider the group $X = \phi^{-1} \hat{K} \phi \le 
\sym(H)$. Since $\phi$ maps the bipartition classes of $\bcay(K,T)$ to the bipartition classes of $\bcay(H,S),$ 
we have $X \in \G(\aut(\bcay(H,S)))$. Also, $X_2 \cong \hat{H}_2,$ because $X \cong K,$ $|H|=|K|$ and  
$H$ and $K$ are both contained  in the group $G$ from $\C$. Thus Lemma \ref{AT3} is applicable, as a result, $X$ and $\hat{H}$ 
are conjugate in $\aut(\bcay(H,S))$. In particular, $H \cong K,$ and since $G$ is homogeneous, there exists 
$\alpha_1 \in \aut(G)$ such that $K^{\alpha_1} =  H$. 
This $\alpha_1$ induces an isomorphism from $\bcay(K,T)$ to 
$\bcay(H,T^{\alpha_1})$. Therefore, $\bcay(H,S) \cong \bcay(H,T^{\alpha_1}),$ and since $\bcay(H,S)$ is a BCI-graph, 
$T^{\alpha_1} =   g S^{\alpha_2}$ for some $g \in H$ and $\alpha_2 \in \aut(H)$. 
By homogeneity, $\alpha_2$ extends to an automorphism of $G,$ implying eventually that $\bcay(G,S)$ is a BCI-graph. 
This completes the proof of the theorem. 
\end{proof}

\end{document}